\documentclass[12pt]{amsart}

\usepackage{amsmath,amssymb,lscape, comment, tikz}
\usepackage{filecontents}
\usepackage{fullpage}
\usepackage[all]{xy}

\newtheorem{theorem}{Theorem}[section]
\newtheorem{proposition}[theorem]{Proposition}
\newtheorem{lemma}[theorem]{Lemma}
\newtheorem{cor}[theorem]{Corollary}

\theoremstyle{definition}
\newtheorem{definition}[theorem]{Definition}
\newtheorem{example}[theorem]{Example}

\theoremstyle{remark}
\newtheorem{remark}[theorem]{Remark}

\numberwithin{equation}{section}

\def\DJ{{\hbox{D\kern-.8em\raise.15ex\hbox{--}\kern.35em}}}
\def\DJo{$\;$\kern-.4em
    \hbox{D\kern-.8em\raise.15ex\hbox{--}\kern.35em koori\'c}}

\def\al{{\alpha}}

\def\ga{{\gamma}}
\def\d{{\delta}}
\def\vf{{\varphi}}
\def\la{{\lambda}}
\def\na{{\nabla}}
\def\om{{\omega}}
\def\Om{{\Omega}}
\def\si{{\sigma}}
\def\Si{{\Sigma}}

\def\bZ{{\mathbb {Z}}}

\def\bR{{\mathbb {R}}}
\def\bC{{\mathbb {C}}}
\def\g{{\mathfrak{g}}}

\def\pA{{\mathcal A}}

\def\pF{{\mathcal F}}
\def\pE{{\mathcal E}}

\def\pL{{\mathcal L}}

\def\pP{{\mathcal P}}

\def\pO{{\mathcal O}}
\def\ad{{\rm ad}}
\def\tr{{\rm tr}}

\def\Vol{{\mbox{\rm Vol}}}

\def\ad{{\mbox{\rm ad}}}
\def\Ad{{\mbox{\rm Ad}}}
\def\Hom{{\mbox{\rm Hom}}}
\def\Lie{{\mbox{\rm Lie}}}

\def\Aut{{\mbox{\rm Aut}}}

\def\diag{{\mbox{\rm diag}}}
\def\GL{{\mbox{\rm GL}}}
\def\SL{{\mbox{\rm SL}}}

\def\U{{\mbox{\rm U}}}
\def\Sp{{\mbox{\rm Sp}}}
\def\rk{{\mbox{\rm rk\,}}}

\def\h{{\mathfrak{h}}}
\def\u{{\mathfrak{u}}}
\def\e{{\epsilon}}
\def\pZ{{\mathcal Z}}
\def\pM{{\mathcal M}}
\def\pH{{\mathcal H}}

\def\ss{{\subset}}
\def\sseq{{\subseteq}}

\newcommand{\mat}[1]{\left(\begin{matrix}#1 \end{matrix}\right)}
\newcommand{\smat}[1]{\left(\begin{smallmatrix}#1\end{smallmatrix}\right)}

\begin{document}

\title[$G$ monopoles]
{Singular $G$-monopoles on $S^1\times \Si$}

\author[B.H. Smith]
{Benjamin H. Smith}

\address{Department of Mathematics \& Statistics, McGill University,
Montr\'{e}al, QC, 
Canada}

 \email{bh2smith@gmail.com}

\thanks{}

\keywords{connection, curvature, instanton, monopole, stability, Bogomolny equation, Sasakian geometry, cameral covers}

\date{}

\begin{abstract}
This article provides an account of the functorial correspondence between irreducible singular $G$-monopoles on $S^1\times \Si$ and $\vec{t}$-stable meromorphic pairs on $\Si$.  The main theorem of \cite{CH} is thus generalized here from unitary to arbitrary compact, connected gauge groups. The required distinctions and similarities for unitary versus arbitrary gauge are clearly outlined and many parallels are drawn for easy transition. Once the correspondence theorem is complete, the spectral decomposition is addressed.
\end{abstract}
\maketitle 


\section{Introduction}

The main goal here is to provide a proof of the bijective Kobayashi-Hitchin type correspondence between the moduli space of \emph{singular $G$-monopoles} over $S^1\times \Sigma$ and the space of \emph{$\vec{t}$-polystable meromorphic pairs} $(\mathcal{P}, \rho)$. Since complex vector bundles are equivalent to principal $\GL_n(\bC)$-bundles, the results of  \cite{CH} form a model for the constructions and results found here. In this setting, however, we will not have the luxury of working with the Lie algebra of skew-hermitian matrices, which form an inductive system. Careful considerations will be made about the properties of the more general Lie algebras involved. For this reason, $G^c$ will denote a complex reductive Lie group (realizable as the complexification of a compact, connected real reductive Lie group $G$). 

The main theorem, stated in full generality, is provided as follows;

\begin{theorem}\label{main}
There is a bijective correspondence between the moduli space \[\pM^{irr}_{k_0}(G,S^1\times \Sigma, \{(p_i, \mu_i)\}_{i=1}^N)\] of irreducible principal $G$-monopoles over $S^1\times \Sigma$ with singularities at $p_i\in S^1\times \Si$ of $\mu_i$-Dirac type, having degree $k_0$ over $\{0\}\times\Si$ and the moduli space 
\[\pM_{\vec{t}s}(\Sigma, {\textbf K}, k_0)\]
 of $\vec{t}$-stable meromorphic pairs $(P, \psi)$, where $P$ is a holomorphic principal $G$-bundle of degree $k_0$ over $\Sigma$ and $\psi$ is a meromorphic section of $\Aut_G(P)$ taking the form 
\[F_i(z) \mu_i(z - z_i) G_i(z)\] 
when expressed locally near $z_i$ with $F_i, G_i$ holomorphic-invertible and $\mu_i$ a cocharacter of the complexified gauge group $G^c$.
\end{theorem}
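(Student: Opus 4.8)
The plan is to construct two mutually inverse maps between the two moduli spaces and to verify that each respects the relevant equivalence relations (gauge equivalence on the monopole side, holomorphic isomorphism of pairs on the other). I would build the forward map $\pM^{irr}_{k_0}\to\pM_{\vec{t}s}$ by dimensional reduction in the $S^1$ direction. Writing a monopole as a connection $\nabla$ on a principal $G$-bundle over $S^1\times\Si$ together with a Higgs field $\Phi\in\Omega^0(\ad P)$ satisfying the Bogomolny equation $F_\nabla=*\,\nabla\Phi$, I would decompose $\nabla$ into its $\Si$-component and its circle component $\nabla_t$. The $\Si$-component furnishes a family of $\bar\partial$-operators on the restrictions $P|_{\{t\}\times\Si}$, and the Bogomolny equation is precisely the statement that the complexified transport operator $\nabla_t+i\Phi$ carries the holomorphic structure at one value of $t$ to the next. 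Evaluating at $t=0$ gives the holomorphic bundle $P$ on $\Si$ of degree $k_0$, and the $S^1$-holonomy of $\nabla_t+i\Phi$ gives a holomorphic automorphism, i.e. the section $\psi$ of $\Aut_G(P)$. Gauge invariance of this construction and independence of the chosen base point (up to isomorphism of pairs) are routine.

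Second, I would carry out the local analysis near each puncture. The $\mu_i$-Dirac model is rotationally symmetric about $p_i$, so the complexified $S^1$-holonomy linking $p_i$ contributes exactly the cocharacter factor $\mu_i(z-z_i)$, while the holomorphic framings on either side of the puncture are absorbed into the invertible factors $F_i$ and $G_i$, yielding the asserted normal form $F_i(z)\mu_i(z-z_i)G_i(z)$ and showing simultaneously that $\psi$ is meromorphic of the prescribed type. To land in the \emph{stable} locus I would argue contrapositively: a $\vec{t}$-destabilizing reduction of $(P,\psi)$ to a parabolic subgroup violating the slope inequality (with weights $\vec{t}$ determined by the circumference of $S^1$ and the singularity data) would, via an integration-by-parts / Chern--Weil identity paired against the Bogomolny equation, produce a $\nabla$- and $\Phi$-invariant reduction of the monopole, contradicting irreducibility.

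The inverse map carries the substantive analytic content and is the main obstacle. Given a $\vec{t}$-stable pair $(P,\psi)$, I must produce a monopole inducing it, which amounts to finding a reduction of $P$ to the maximal compact $G\ss G^c$ (a Hermitian metric $h$) whose associated Chern connection and Higgs field, built from $\psi$ and $h$, solve the singular Bogomolny equation with the prescribed Dirac asymptotics. I would set this up as a Hermitian--Einstein-type equation and solve it by a continuity method (or a Donaldson-type flow), with $\vec{t}$-stability supplying the a priori $C^0$-estimate that rules out the degenerations obstructing convergence. The essential new difficulty relative to \cite{CH} is that the gauge group is no longer unitary: the skew-hermitian matrices and their inductive system are unavailable, so each estimate must be phrased intrinsically through the Cartan decomposition of $\g^c$, the adjoint action, and the combinatorics of cocharacters and parabolic weights rather than through explicit matrix entries.

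Finally I would verify that the two constructions are mutually inverse. Starting from a stable pair, solving for $h$, and re-extracting the scattering automorphism must return the original $\psi$ up to isomorphism, which follows from uniqueness of the Hermitian--Einstein reduction (again a consequence of stability); while starting from a monopole, reading off $(P,\psi)$, and resolving the Bogomolny equation must recover the original gauge-equivalence class. Injectivity on each side then follows from these uniqueness statements, completing the bijection.
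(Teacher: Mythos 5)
Your forward map is essentially the paper's: the scattering operator $\na_t - i\Phi$ (your sign convention differs harmlessly), restriction to a nonsingular slice, the local normal form $F_i(z)\mu_i(z-z_i)G_i(z)$ from Proposition \ref{scatmap}, and stability via a Chern--Weil identity in which the degree of a parabolic reduction equals $-\int \|\pF\|^2$ for the second fundamental form $\pF$, with vanishing forcing a Levi reduction contradicting irreducibility --- this is exactly Proposition \ref{negative}. Your injectivity sketch differs from the paper (which extends the isomorphism of pairs over $S^1\times\Si$ by scattering and kills it with a Weitzenb\"ock/integration-by-parts vanishing argument, Proposition \ref{Injectivity}), but appealing instead to uniqueness of the Hermitian--Einstein metric is a defensible alternative, modulo care with Simpson's normalizations ($\det H = \det K$, mutual boundedness).

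The genuine gap is in the inverse map, which you correctly identify as the substantive content but then set up in a way that would fail. You propose solving a Hermitian--Einstein-type equation directly on $S^1\times\Si$ by a continuity method or flow with stability giving $C^0$-estimates; but the base is a real three-manifold with interior punctures at which the curvature necessarily blows up like $R^{-2}$, so no standard compact-K\"ahler continuity or flow machinery applies. The paper's route requires two ideas absent from your proposal. First, one must \emph{oxidize} to the four-dimensional K\"ahler manifold $X = S^1\times Y$, extending the pair to an $S^1$-invariant holomorphic bundle $\bar P$ glued from $\rho$ with the correct twisting around the $p_j$'s, so that the Bogomolny problem becomes an $S^1$-invariant instanton/HE problem on a non-compact finite-volume K\"ahler manifold where Simpson's heat-flow theorem (with its exhaustion-function and $\sup|\Lambda F_K| < c$ hypotheses, verified here following \cite{CH, MP}) actually applies; this forces the construction of an explicit initial metric, patched by partition of unity from the model $\mu_j$-Dirac metrics near the singularities, to serve as a parametrix with bounded $\Lambda F$. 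Second, Simpson's limit gives no regularity at the punctures: establishing that the limiting solution genuinely has $\mu_j$-Dirac-type singularities requires the Kronheimer--Pauly desingularization, pulling back by the Hopf map $\pi\colon B^4\to B^3$, re-running the flow with Dirichlet boundary conditions upstairs, and invoking uniqueness of the flow to match the two solutions. Writing ``prescribed Dirac asymptotics'' into the boundary conditions of a continuity method does not substitute for this local analysis; without the lift to $X$ and the Hopf-map step, neither existence nor the singularity type of the solution is obtained.
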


In less cryptic terminology, this theorem states that one may parameterize the moduli space of $G$-monopoles over $S^1\times \Sigma$ having singularities of Dirac-type by the more tractable complex algebraic moduli space of $\vec{t}$-stable meromorphic pairs. There is a family of these moduli spaces, parameterized by the location of singularities on $\Sigma$ and indexed by the combinatorial data given by the initial degree$k_0$, and ``charge'' $\mu_i$ of the bundle at the singularities. That is to say, the real work lies in verifying that a $\vec{t}$-stable meromorphic pair $(P,\rho)$ is the skeletal information required to uniquely construct a solution to the monopole equation. 

The method used to reconstruct a monopole from its singular data is an interesting application of heat flow on the space of positive hermitian metrics which, in the course of doing so, makes use of the celebrated Hopf-fibration. Heuristically, one wishes to, holomorphically, patch together a $G$-bundle on $S^1\times \Sigma$ having the correct prescribed 'twisting', so to be in the correct topological isomorphism class. This is done by patching together a metric (using a partition of unity) that will be a parametrix of the solution having the correct singular data. Once this metric is defined, the heat flow is employed to evenly distribute the curvature, induced by the metric, towards a solution to the monopole equation.

Historically there have been several results involving classifications of these types and the general picture is known as the \emph{Kobayashi-Hitchin correspondence}. There are three foundational works in this area; namely the papers of Donaldson \cite{Do83, Do85} and Uhlenbeck-Yau \cite{UY86, UY89} in establishing the Kobayashi-Hitchin correspondence for holomorphic vector bundles on compact K\"{a}hler manifolds \cite{LT}. The progression of these results is the work of many mathematicians starting with Narasimhan-Seshardi \cite{NS} for Riemann surfaces, Donaldson \cite{Do83, Do85,Do87} again for Riemann surfaces and also algebraic surfaces, and Uhlenbeck-Yau \cite{UY86,UY89} for compact K\"{a}hler manifolds. A careful analysis of heat flow in these settings, and more generally in situations with singularities, is due to Simpson \cite{S}.  A good reference for the completed Kobayashi-Hitchin correspondence was presented by L\'{u}bke and Teleman \cite{LT} in great detail and generality. 

In our situation, the solutions to the Bogomolny (monopole) equation are required to have singularities. In 1988, Simpson \cite{S} provided a short list of assumptions sufficient to guarantee the required long term existence of the heat equation in these cases. Our domains and initial conditions fit Simpson's profile (as first employed in \cite{CH}) and so we have the existence of our solutions with the exception of singular neighbourhoods that must be considered separately.

It was M. Pauly \cite{MP}, following unpublished work of Kronheimer who first dealt with Dirac-type singular monopoles on 3-balls. He displayed, via a radially extended version of the Hopf fibration, a correspondence between Dirac-type monopoles on $B^3\backslash \{0\}$ and smooth $S^1$-invariant anti self-dual connections on $B^4\backslash \{0\}$. This was used to solve the problem of classifying singular Hermitian-Einstein (i.e. $G= \U(n)$) monopoles on $S^1\times \Sigma$ which was recently worked out by B. Charbonneau and J. Hurtubise \cite{CH}.

Section 2 provides background on the Bogomolny equation and the $\mu$-Dirac monopole in the context of principal bundles. The moduli spaces and characteristic classes of interest are defined and partially analyzed in Section 3. Section 4 is devoted to the stability theory of monopoles and meromorphic pairs. Proof of the main Theorem \ref{Main} is found in Section 5. Finally, in the last section, the abelianization (or spectral decomposition) of our monopoles is provided along with some examples of Weyl-invariant compactifications of maximal tori. 

\section{Background and basic objects}

Throughout this paper, denote by $G^c$ a complex reductive Lie group of rank $n$, its maximal compact subgroup $G$,  a Riemann surface $\Sigma$ with Hermitian metric, a circle $S^1$ of circumference $\tau$ with standard metric and impose the product metric on the manifold $S^1\times \Sigma$ having coordinates $t,z = x+iy$.

\subsection{Bogomolny equations and generalizations}
  Let $P$ be a principal $G^c$-bundle on $S^1\times \Si$,
\[Y:=S^1\times \Sigma \backslash \{p_1,\dots, p_N\}\] 
where each $p_i$ has coordinates $(t_i, z_i)\in S^1\times \Sigma$ and, purely for the sake of notational convenience, the $t_i$'s and $z_i$'s are assumed to be distinct. The restriction of $P$ to sufficiently small spheres about each $p_i$ comes with a reduction to the maximal real torus $T\subset G$ whose transition function on the 2-sphere is given by some cocharacter $\mu_i$ of $T$. Suppose that $P$ admits a $G$-connection $\na$ and a section $\Phi \in H^0(Y, \ad (P))$, of the adjoint bundle called a \emph{Higgs field}. The triple $(P, \na, \Phi)$ satisfies the \emph{Bogomolny equation} if
\begin{equation}\label{BE}
F_\na = *d_\na \Phi.
\end{equation}
It can be shown that this equation is equivalent to a special case of a reduction from the anti self-dual (ASD) equations over $S^1\times Y$. 


Unfortunately equation \ref{BE} imposes unnecessarily strong constraints on the first Chern classes (i.e. that they average to zero in a suitable sense) so the following, slightly weaker, form will be considered here to allow for solutions with arbitrary degree. That is to say, the triple $(P,\na, \Phi)$ is said to satisfy the \emph{Hermitian-Einstein-Bogomolny (HEB) equation} if
\begin{equation}\label{HEB}
F_\na - i C  \cdot\om_\Sigma = *d_\na \Phi
\end{equation}
where $C$ is in the center, $\pZ(\g)$, of the Lie algebra $\mathfrak{g} = Lie(G)$ and $\om_\Sigma \in \Omega^2(\Sigma)$ represents the K\"{a}hler form of our Riemann surface. The difference here between equations \eqref{BE} and \eqref{HEB} is an extra term which allows for non-zero global central curvature. Note that central elements of $\mathfrak{g}$ are invariant under conjugation and thus may be equivalently viewed as sections of $\ad(P)$. 

Since our domain a product manifold, equation \eqref{HEB} can be split into components as stated in the following lemma.

\begin{lemma}\label{breakdown}
The HEB-equation \eqref{HEB} can be re-expressed as the following three equations;
\begin{equation}\label{HEB1}
F_\Sigma- \na_t\Phi = iC,
\end{equation}
\begin{equation}\label{HEB2}
[\na^{0,1}_\Sigma, \na_t-i\Phi]=0
\end{equation}
\begin{equation}\label{HEB3}
[\na^{1,0}_\Sigma, \na_t+i\Phi]=0
\end{equation}
where $F_\Sigma$ is the surface component of the curvature tensor (i.e. $F = F_\Sigma\om_\Si + \cdots$) and $\na = \na_\Sigma^{0,1} d\bar{z} + \na_\Sigma^{1,0}dz + \na_tdt.$ Note that the third equation is merely the dual of the second.
\end{lemma}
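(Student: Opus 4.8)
The plan is to carry out the component decomposition explicitly, exploiting the product structure of the metric on $S^1\times\Si$, and then to match coefficients against a basis of $2$-forms. First I would record the curvature in the split form dictated by the coordinates $t$ and $z=x+iy$. With $\na = \na^{0,1}_\Si\,d\bar z + \na^{1,0}_\Si\,dz + \na_t\,dt$, the only surviving commutators of the covariant-derivative operators are the surface term $[\na^{1,0}_\Si,\na^{0,1}_\Si]$, whose $\om_\Si$-coefficient is by definition $F_\Si$, and the two mixed terms pairing $\na_t$ with the surface directions, so that
\[ F_\na = F_\Si\,\om_\Si + [\na_t,\na^{1,0}_\Si]\,dt\wedge dz + [\na_t,\na^{0,1}_\Si]\,dt\wedge d\bar z. \]
Likewise, treating the Higgs field $\Phi$ as a $0$-form valued in $\ad(P)$ and using $\na_\bullet\Phi=[\na_\bullet,\Phi]$, I would expand $d_\na\Phi = \na_t\Phi\,dt + \na^{1,0}_\Si\Phi\,dz + \na^{0,1}_\Si\Phi\,d\bar z$.

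The key computational input is the behaviour of the Hodge star of the product metric on $1$-forms. On the oriented $3$-manifold $S^1\times\Si$ with volume form $dt\wedge\om_\Si$ one has $*\,dt=\om_\Si$, while for a surface $1$-form the star reintroduces the complementary factor $dt$, giving $*\,dz = i\,dt\wedge dz$ and $*\,d\bar z = -i\,dt\wedge d\bar z$ (equivalently $*_\Si dz=-i\,dz$, $*_\Si d\bar z=i\,d\bar z$ on $\Si$, with a sign reversal upon reinserting $dt$). Applying this to $d_\na\Phi$ yields
\[ *d_\na\Phi = \na_t\Phi\,\om_\Si + i\,[\na^{1,0}_\Si,\Phi]\,dt\wedge dz - i\,[\na^{0,1}_\Si,\Phi]\,dt\wedge d\bar z. \]

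I would then substitute both sides into \eqref{HEB} and match the coefficients of the three linearly independent $2$-forms $\om_\Si$, $dt\wedge dz$, $dt\wedge d\bar z$. The $\om_\Si$-coefficient gives $F_\Si - iC = \na_t\Phi$, which is \eqref{HEB1}. The $dt\wedge dz$-coefficient gives $[\na_t,\na^{1,0}_\Si]=i[\na^{1,0}_\Si,\Phi]$; moving everything to one side and using antisymmetry of the bracket turns this into $[\na^{1,0}_\Si,\na_t+i\Phi]=0$, namely \eqref{HEB3}, and the $dt\wedge d\bar z$-coefficient gives \eqref{HEB2} in the same way. Finally I would note that, since $\Phi$ takes values in the real Lie algebra $\g$ and $\na$ is a $G$-connection, the involution $z\leftrightarrow\bar z$, $i\leftrightarrow-i$ interchanges \eqref{HEB2} and \eqref{HEB3}, which substantiates the closing remark that the third equation is the dual of the second.

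The main obstacle I anticipate is bookkeeping rather than depth: pinning down the correct signs and factors in the $3$-dimensional Hodge star acting on $dz$ and $d\bar z$ (in particular the sign reversal relative to $*_\Si$ when the $dt$ direction is reinstated, and the normalization that identifies the surface curvature coefficient with $F_\Si$). Once these are fixed consistently with the chosen orientation and the normalization of $\om_\Si$, the three equations fall out of coefficient matching, the only remaining subtlety being the conversion of the operator commutators $[\na_\bullet,\Phi]$ into the covariant derivatives $\na_\bullet\Phi$.
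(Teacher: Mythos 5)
Your proposal is correct and takes essentially the same approach as the paper: both decompose $F_\na$ and $*d_\na\Phi$ into components of the product metric and match coefficients, the only (cosmetic) difference being that you work directly in the complexified basis $\{\om_\Si,\, dt\wedge dz,\, dt\wedge d\bar z\}$, whereas the paper extracts the real $\langle x,t\rangle$ and $\langle y,t\rangle$ components and then forms the combination $\na_x+i\na_y$ to obtain \eqref{HEB2}. Your Hodge-star bookkeeping ($*\,dt=\om_\Si$, $*\,dz=i\,dt\wedge dz$, $*\,d\bar z=-i\,dt\wedge d\bar z$) is consistent with the paper's signs, and your conjugation argument correctly substantiates the remark that \eqref{HEB3} is the dual of \eqref{HEB2}.
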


\begin{proof}
This is shown by breaking equation \eqref{HEB} into components and remembering that it is ``unitary'' (in the $G$-sense). Extracting the surface component, $\Sigma = dx\wedge dy$, of \eqref{HEB} gives 
\[F_\Sigma - iC  = \na_t\Phi\]
where the Hodge-star on the right hand side of \eqref{HEB} takes surface components to time, $\langle t\rangle $, components and vice-versa. 

For equation \eqref{HEB2}, extract components $\langle x,t\rangle$, $\langle y,t\rangle$ and combine them. On the left hand side, the $\langle x,t\rangle $ component of curvature is realized as the commutator $[\na_x,\na_t]$ which gives the equation 
\[[\na_x,\na_t] - 0 = -\na_y\Phi = -[\na_y, \Phi]\] 
where the negative is recognized as coming from the Hodge-star applied to the ordered basis $\{x,y,t\}$. Similarly, the $\langle y,t\rangle $ component is
\[[\na_y, \na_t] = \na_x\Phi = [\na_x,\Phi].\]
Multiplying the second by $i$ and adding these together gives
\[[\na_x+i\na_y,\na_t] = [-\na_y+i\na_x, \Phi] = [\na_x+ i \na_y, i \Phi]\]
and simplification of this is precisely equation \eqref{HEB2}.
\end{proof}

\subsection{The $\mu$-Dirac monopole}\label{Dirac}
This section is based on standard knowledge of complex line bundles on $S^2$. 
Throughout the remainder of this article, let $\mu\in X_*(T) = \Hom(S^1,T)$ be a cocharacter of a fixed maximal torus $T\ss G$. 

\begin{definition}
For any real compact torus $T$, a \emph{$\mu$-Dirac monopole} is a principal $T$-bundle over $\bR^3\backslash\{0\}$ of degree $\mu$, equipped with a connection $\na$ and Higgs field $\phi$ satisfying the Hermitian-Einstein-Bogomolny equation \eqref{HEB} provided as follows:

On $\bR^3$, one has spherical coordinates related to Euclidean by \[(t, x,y) = (R\cos \theta, R\cos \psi\sin \theta, R\sin \psi\sin \theta)\] and volume form 
\[dV = R^2\sin \theta dR d\theta d\psi = - r^2 dr d(\cos \theta d\psi). \]

For any $\mu\in X_*(T)$ define the principal $T$-bundle $L_\mu$ over $\bR^3\backslash\{0\}$ by the transition function $g_{\pm} = \mu(\psi)$ between neighbourhoods \[U_\pm = \bR^3\backslash \{\pm t\geq 0\}.\] 
Any section on this bundle may be expressed by maps $\sigma_\pm:U_\pm\to T$ satisfying $\sigma_- = g_{\pm}\sigma_+$.

Now, consider a connection defined locally by Lie-algebra-valued 1-forms
\[A_\pm = \frac{i\mu_*}{2}(\pm 1+\cos \theta)d\psi\]
where $\mu_*\in \Lie(T)$ is the differential of $\mu$ evaluated at 0 and the Higgs field $\phi = \frac{i\mu_*}{2R}$. It is clear that 
\[\na\phi = d\phi +[A,\phi] = d\phi = -\frac{i\mu_*}{2R^2}dR = * \left(\frac{i\mu_*}{2}d(\cos\theta d\psi)\right) =*F_\na, \]
so that the pair $(\na, \phi)$ satisfies the Bogomolny equation \eqref{BE} and, equivalently, equation \eqref{HEB} with $C = 0$.
\end{definition}

If $U_\pm$ represents the open cover of $\bR^3\backslash\{0\}$ obtained by removing the positive/negative $z$-axes, then the overlap $U_+\cap U_-$ is homotopy-equivalent to a circle and so the transition functions defining such a bundle can be given, up to homotopy, by a cocharacter $\mu\in X_*(T)$ and sections $\sigma$ are uniquely expressed as maps $\si_\pm:U_\pm \to T$ satisfying $\si_+ = \mu \cdot \si_-$. 

Following this, one has

\begin{lemma}
The $\mu$-Dirac monopoles are all induced from the standard $S^1$-Dirac monopole by the cocharacter $\mu\in X_*(T)$.  
\end{lemma}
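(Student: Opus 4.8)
The plan is to read ``induced by the cocharacter $\mu$'' as the extension of structure group along $\mu: S^1\to T$, and then to check that this operation reproduces, term by term, the explicit data recorded in the preceding definition. Write $L$ for the standard $S^1$-Dirac monopole, i.e.\ the case $T=S^1$, $\mu=\id$, with transition function $g_\pm=\psi$, connection $A^0_\pm=\frac{i}{2}(\pm1+\cos\theta)\,d\psi$, and Higgs field $\phi^0=\frac{i}{2R}$ on $\bR^3\backslash\{0\}$. Given any $\mu\in X_*(T)$, form the associated principal $T$-bundle $L_\mu:=L\times_{S^1}T$, where $S^1$ acts on $T$ through $\mu$, and equip it with the connection and section obtained by pushing $A^0$ and $\phi^0$ forward along the Lie-algebra homomorphism $d\mu:\Lie(S^1)\to\Lie(T)$. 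The assertion to be proved is that the triple $(L_\mu,\na,\phi)$ so constructed is exactly the $\mu$-Dirac monopole.

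First I would match the topological data. The extension functor applies $\mu$ to transition cocycles, so the cocycle $g_\pm=\psi$ of $L$ becomes $\mu(\psi)$, which is precisely the transition function defining $L_\mu$ in the definition; passing to homotopy classes on $U_+\cap U_-\simeq S^1$ sends the generator of $\pi_1(S^1)$ to $\mu\in\pi_1(T)=X_*(T)$, so the degree of $L_\mu$ is $\mu$, as required. Next I would match the differential data. By construction the generator of $\Lie(S^1)$ is carried by $d\mu$ to the element $i\mu_*\in\Lie(T)$ appearing in the definition, so the induced connection form is $d\mu(A^0_\pm)=\frac{i\mu_*}{2}(\pm1+\cos\theta)\,d\psi=A_\pm$ and the induced Higgs field is $d\mu(\phi^0)=\frac{i\mu_*}{2R}=\phi$, agreeing with the stated formulas on the nose.

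Finally I would verify that the Bogomolny identity (equivalently \eqref{HEB} with $C=0$) is preserved under this push-forward, so that solving the equation for $L_\mu$ reduces to the single classical computation for $L$. Since $d\mu$ is a fixed linear map on Lie algebras it commutes with the exterior derivative $d$ and with the Hodge star $*$ (which acts only on the form factor) and it intertwines brackets; as $S^1$ and $T$ are abelian the brackets vanish outright, whence $F_\na=d(d\mu\,A^0)$ and $d_\na\phi=d(d\mu\,\phi^0)$. Applying $d\mu$ to the standard identity $d\phi^0=*\,dA^0$ then yields $d_\na\phi=*F_\na$, with $C=0$ mapping to $0\in\pZ(\g)$. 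The only point requiring genuine care is the bookkeeping of $d\mu$: one must confirm that applying a Lie-algebra homomorphism to the local connection forms produces a well-defined connection on $L_\mu$ (compatibility across the overlap $U_+\cap U_-$) and that this construction commutes with $*$. In the present abelian setting this is routine, which is exactly why the lemma collapses to the term-by-term comparison above.
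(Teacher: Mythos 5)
Your proposal is correct and takes essentially the same approach as the paper: both realize the bundle as the associated bundle $L_1\times_{S^1}T$, push the charge-one Dirac connection and Higgs field forward along $d\mu$, and observe that the Bogomolny equation is preserved. The only difference is one of detail---you explicitly carry out the verification (that $d\mu$ commutes with $d$ and $*$ and that the brackets vanish in the abelian setting) which the paper compresses into the remark that it is ``tautological to verify.''
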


\begin{proof}

First note that, as for any bundle over a sphere, the smooth isomorphism class of any torus bundle is determined by the homotopy classes of maps $[S^1, T]$ for which one may choose a cocharacter $\mu\in X_*(T)$ as a representative. Thus this torus bundle is isomorphic to the $T$-bundle induced by $\mu$ from the line bundle, $L_1$ of charge 1 over $\bR^3\backslash \{0\}$. That is, one may consider bundles of the form
\[L_1(\mu) := L_1\times_{S^1} T\]
where the diagonal action of $S^1$ on $L_1$ is as usual and via $\mu$ on $T$. 

Having that any $T$-bundle on $\bR^3\backslash \{0\}$ realized as $L_1(\mu)$ for some cocharacter $\mu\in X_*(T)$, it is natural to choose the necessary connection and Higgs field to be obtained through $\mu$ as well. Indeed, with connection form defined locally on the open cover $U_\pm:= \bR^3\backslash \{\mp z \geq 0\}$ as  $\om_\pm = \mu_*(A_\pm)$ and Higgs field $\Phi := \mu_*(\phi)$ where $A$ and $\phi$ are the connection and Higgs field for the model Dirac monopole of charge 1, defined in \cite{CH}. It is then tautological to verify that $(L_1(\chi), \om, \Phi)$ satisfies the monopole equation. 
\end{proof}

With this identification, there is no need to pursue the structure of the $\mu$-Dirac monopole further. Calculations for the change between holomorphic and unitary gauges are the same as for vector bundles (c.f. \cite{CH})

\section{Singular $G$-monopoles, holomorphic structures and meromorphic pairs}

This section introduces and elaborates on the analytic and topological details involving both singular $G$-monopoles on $S^1\times \Si$ and their eventual algebraic equivalent, meromorphic pairs. The stability of both is discussed in depth including motivation and consistency arguments from the standard theory. 

A map, $\pH$, from singular monopoles to meromorphic pairs is defined and shown to preserve stability. This was proven for singular Hermitian-Einstein ($\U_n$) monopoles in \cite{CH}. However, their proof relies on an inductive argument on the rank of the group and does not carry over to arbitrary reductive gauge (e.g. the exceptional Lie group $G_2$ does not admit an inductive system). Here we adapt from similar proofs found in \cite{K, LT} and heavily rely on the fact that, loosely stated, the curvature of holomorphic subbundles is boundeded by the total curvature. This is the essential idea used in the proof of the Kobayashi-Hitchin correspondence, but here the argument is adapted for meromorphic Chern forms.

\subsection{Singular $G$-monopoles}\label{sgms}

For a point $p$ in a three manifold $Y$, let $R$ represent the geodesic distance to $p$ and use a normal coordinate system $(t,x,y)$ centred at $p$ for which the metric in these coordinates is represented by $I + \pO(R)$ as $R\to 0$.  Let $(\theta, \psi)$ represent angular coordinates, as above, for the $\mu$-Dirac monopole on the sphere of constant radius $R = c$ and denote the open ball defined by $R<c$ by $B^3$.

\begin{definition}
A solution $(P, \na,\Phi)$ to the HEB equation \eqref{HEB} on $Y \backslash \{p\}$ has a \emph{singularity of $\mu$-Dirac type} at $p$ if:
\begin{itemize}
\item locally, on $B^3\backslash \{p\}$, $P$ admits a reduction of structure group to $T$ which is $G$-isomorphic (replacing \emph{unitarily isomorphic}) to the $\mu$-Dirac monopole $T_\mu$, and
\item under this isomorphism, in the two open sets, $U_\pm = \bR^3\backslash \{\pm t\geq 0\}$, trivializing $P$ on $B^3$ induced by standard trivializations of the $T_\mu$ (so that the $P$-trivializations have transition function given by $\mu$), one has, in both trivializations, that\footnote{Note here that $\mu_*= \frac{d\mu}{d\psi}|_{\psi = 0}$ is intended to mimic the formulation in $\GL_n$ which reads \[i\diag(k_1, \dots,k_n) = \frac{d}{d\psi}|_{\psi = 0} \diag(e^{ik_1\psi},\dots,e^{ik_n\psi})\]}
\[\Phi = \frac{\mu_*}{2R}+ \pO(1) \text{ and } \na(R\Phi) = \pO(1)\]
\end{itemize}
Furthermore, a solution to equation \eqref{HEB} with singularities $\{p_j\}_{j=1}^N$ of $\mu_j$-Dirac type a is called a \emph{singular $G$-monopole} (of Dirac-type).
\end{definition}

\begin{remark}
Heuristically, this definition says that a solution with singularity of Dirac type is locally (in a neighbourhood of a singular point) comparable to a $\mu$-Dirac monopole. 

The second part of the definition ensures, first that the Higgs field respects the local decomposition of $P$ into Dirac monopoles and the second constraint, via equation \eqref{HEB}, ensures that the curvature is $\pO(R^{-2})$ and hence integrable in neighbourhoods of singularities. Indeed, 
\[\pO(1) = \na(R\Phi) = dR\wedge \Phi + R\cdot d_\na \Phi = dR \wedge \Phi + R\cdot (*F_\na - *i CI_n \cdot \om_{\Sigma})\]
implying 
\[*F_\na = \frac{1}{R}(\pO(1) - dR\wedge \Phi ) + *i C \cdot \om_{\Sigma} = \frac{\pO(1) + \pO(R^{-1})}{\pO(R)} + \pO(1) = \pO(R^{-2}) \] 
\end{remark}

The \emph{moduli space of irreducible singular $G$-monopoles}\footnote{defined here simply as a set} on $S^1\times \Sigma$ having Dirac singularities of type $\mu_j$ at $p_j = (t_j, z_j)$ for $j = 1, \dots, N$ is denoted by
\[\pM_{k_0}^{irr}(G, S^1\times \Sigma, \{(p_j, \mu_j)\}_{j=1}^N).\]

\subsection{Holomorphic structures and scattering}\label{scattering}

A \emph{holomorphic structure} on $Y$, will be an intermediary object, obtained by complexification of $P$, when passing from monopoles to meromorphic pairs. However, such objects can be defined independently from those obtained through monopoles. 

\begin{definition}\label{holstr}
A \emph{holomorphic structure} on a $G^c$-bundle $P^c$ over $Y$ is defined by two commuting, covariant (local) differential operators 
\[\na_\Si^{0,1}:\Gamma_\ell(P) \to \Gamma_\ell(P)\otimes (T\Si^{0,1})^*\text{ and } \na_t^c:\Gamma_\ell(P)\to \Gamma_\ell(P)\]
expressed locally as 
\[(\bar{\partial_z} + A^{0,1}_\Si)d\bar{z} \text{ and } \partial_t - i \vf\]
such that near singularities there exists a reduction to $G$ and $\na_t^c$ has the asymptotics of a Dirac-singularity.
\end{definition}

This definition allows for a tangible notion of holomorphic sections over an odd-dimensional domain.
 
\begin{definition}
A (local) section $\si\in \Gamma_\ell(P^c)$ is \emph{holomorphic} if it is parallel with respect to both $\na_\Si^{0,1}$ and $\na_t^c$. That is, $\si$ is holomorphic in the usual sense when restricted to any complex slice $\Si_t$, and satisfies $\na_t^c\si = 0$ (i.e. respecting the commutative nature of the operators).
\end{definition}

One sees, via Equation \eqref{HEB2} in Lemma \ref{breakdown}, that the complexification of a monopole $(P, \na, \Phi)$ admits a holomorphic structure. Concretely,

\begin{proposition}
There exists a forgetful map from monopoles to holomorphic structures on $Y$ given by 
\[(P, \na, \Phi)\mapsto (P^c,\na^{0,1}_\Si, \na^c)\]
where $\na^{0,1}_\Si =  \na_{|\{0\}\times \Si}^{0,1}$ and $\na^c = \na_t - i \Phi$.
\end{proposition}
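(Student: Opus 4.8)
The plan is to verify that the pair of operators $(\na^{0,1}_\Si, \na^c)$ with $\na^c = \na_t - i\Phi$ satisfies each clause of Definition \ref{holstr} once the complexification $P^c := P \times_G G^c$ is formed. First I would pass the connection $\na$ and the Higgs field $\Phi \in H^0(Y, \ad(P))$ to $P^c$; the antiholomorphic component $\na^{0,1}_\Si$ of the connection in the $\Si$-directions and the combination $\na^c = \na_t - i\Phi$ then act as covariant differential operators on local sections of $P^c$. In a local trivialization I would write $\na^{0,1}_\Si = (\bar\partial_z + A^{0,1}_\Si)\,d\bar z$ directly from the definition of the $(0,1)$-part of the connection, and $\na^c = \partial_t + A_t - i\Phi = \partial_t - i\vf$ after setting $\vf := \Phi + i A_t$ (equivalently, working in temporal gauge $A_t = 0$ so that $\vf = \Phi$). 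This establishes the required local form $\partial_t - i\vf$.

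The central point is commutativity: the two operators define a holomorphic structure precisely when $[\na^{0,1}_\Si, \na^c] = 0$, and this is exactly equation \eqref{HEB2} of Lemma \ref{breakdown}. Since $(P, \na, \Phi)$ solves the HEB equation \eqref{HEB}, that lemma furnishes $[\na^{0,1}_\Si, \na_t - i\Phi] = 0$ with no further work, so the two operators commute and the induced operators on $P^c$ are integrable.

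Finally I would check the behaviour near the singular points $p_j$. By the definition of a singular $G$-monopole, in a neighbourhood $B^3 \setminus \{p_j\}$ the bundle $P$ admits a reduction to the maximal torus $T \subset G$ that is $G$-isomorphic to the $\mu_j$-Dirac monopole, and $\Phi = \frac{(\mu_j)_*}{2R} + \pO(1)$ in the standard trivializations. This reduction to $T$ supplies the reduction to $G$ demanded by Definition \ref{holstr}, and substituting the leading term of $\Phi$ into $\na^c = \na_t - i\Phi$ shows $\na^c$ carries exactly the asymptotics of a Dirac singularity. Together these verify all three clauses, yielding the forgetful map.

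I expect the main obstacle to be not the commutativity---which equation \eqref{HEB2} hands us outright---but the bookkeeping around the singular locus: one must confirm that the local $T$-reduction underlying the Dirac singularity survives complexification and that the $\pO(1)$ error terms in $\Phi$ and in $\na(R\Phi)$ do not disturb the leading Dirac asymptotics of $\na^c$. A secondary point is that the assignment should descend to gauge-equivalence classes, since a $G$-valued gauge transformation of $(P,\na,\Phi)$ must induce an equivalence of the resulting holomorphic structures; this follows because $G \subset G^c$ acts compatibly on all three pieces of data.
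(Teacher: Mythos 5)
Your proposal is correct and follows essentially the same route as the paper: the paper offers no separate proof, justifying the proposition entirely by the observation that the commutator condition is exactly equation \eqref{HEB2} of Lemma \ref{breakdown}, which is precisely your central step. Your additional checks (the local form $\partial_t - i\vf$, the Dirac asymptotics of $\na^c$ near the $p_j$, and descent to gauge classes) are routine verifications the paper leaves implicit, and you carry them out correctly.
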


To holomorphic structures, one may apply the following scattering technique. The \emph{scattering operator} is the second differential operator, $\na^c$, of a holomorphic structure (also, found as the second term in the commutator from equation \eqref{HEB2}). This is a linear first order differential operator in the $S^1$-direction of $S^1\times \Sigma$ and amounts to a complex parallel transport\footnote{Indeed, when the Higgs field is zero, this is exactly the parallel transport in the $t$-direction.} when applied to sections. That is, setting $P^c := P\times_G G^c$ (i.e. the complexification of $P$) let parallel sections $\si \in \Gamma_\ell (P^c)$ satisfy
\[\na^c\si = 0.\]

As usual, whenever the curve $[t,t']\times \{z\}$ contain no singularities, this provides a smooth, fibre-wise isomorphism, 
\[\rho_{t,t'}:P^c_{(t, z)}\to P^c_{(t',z)}\]
defined more precisely as; For each $p\in P^c_{(t,z)}$, let $\ga$ be the unique solution to $\na^c\ga = 0$ with $\ga(t) = p$. Then $\rho_{t,t'}(g) = \ga(t')$.

For intervals $[t, t']$ containing no singularities, integration of the scattering operator defines an isomorphism between $P_{\{t\}\times \Sigma}$ and $P_{\{t'\}\times \Sigma'}$. When there is a singularity at some time $t_i \in (t,t')$ consider, for simplicity, the singularity at the origin of a chart for $\Sigma$ with time considerations as $-1<0<1$.  The result (\cite{CH} Proposition 2.5) is that 

\begin{proposition}\label{scatmap}
In holomorphic trivializations at $t = \pm 1$ the scattering map $\rho_{-1,1}$ is locally expressed in the form 
\[h(z)\mu(z)g(z)\] 
with $h,g:U\subset \bC\to G$ holomorphic and $\mu:\bC^* \to T^c$ is a map into a maximal torus of $G$. Note that the coordinate $z$ has been chosen so that the singularity is at $0$.

We say that a map $\rho:U\to G$ admitting this type of local decomposition is \emph{encoded by $\mu$ at $z$}.
\end{proposition}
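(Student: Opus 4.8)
The plan is to reduce the assertion to an abelian model computation near the singularity and then to propagate the answer outward using the holomorphicity built into the scattering operator. First I would exploit the local structure imposed by the Dirac-singularity hypothesis: on a small ball about the origin the bundle $P^c$ reduces to $T^c$ and is $G$-isomorphic to the complexification of the $\mu$-Dirac monopole $T_\mu$, which by the earlier lemma is induced from the charge-one $S^1$-Dirac monopole by the cocharacter $\mu$. Thus it suffices to understand the scattering of the model. Writing $\na^c = \na_t - i\Phi$ and inserting the model asymptotics $\Phi = \frac{\mu_*}{2R} + \pO(1)$ together with $\na(R\Phi) = \pO(1)$, where $R = \sqrt{t^2 + |z|^2}$, the parallel-transport equation $\na^c\si = 0$ becomes, to leading order, the abelian ordinary differential equation $\partial_t\si - i\frac{\mu_*}{2R}\si = 0$ along each slice $\{z\}\times[-1,1]$.

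Second, I would solve this model equation explicitly. For the charge-one $S^1$-factor the integral $\int_{-1}^{1} \frac{dt}{2R}$ evaluates to $\sinh^{-1}(1/|z|) \sim \log(1/|z|)$ as $z\to 0$, so the unitary parallel transport across the singularity contributes a factor behaving like $|z|$ raised to the relevant integer power. Passing to a holomorphic trivialization, that is, changing gauge so that $\na_\Si^{0,1}$ becomes $\bar{\partial}$, converts this real-analytic growth into the holomorphic factor $z$. Applying the cocharacter $\mu$ to the charge-one answer then yields precisely the local singular factor $\mu(z)$, a map $\bC^*\to T^c$ of the stated type, with the convention that $z$ has been centred at the singularity.

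Third, I would account for the deviation of the true monopole from the model. Away from the singular slice the operator $\na^c$ is nonsingular, so $\rho_{-1,1}$ is a smooth fibrewise isomorphism; moreover, since $\na^c$ commutes with $\na_\Si^{0,1}$ by equation \eqref{HEB2} of Lemma \ref{breakdown}, the map $\rho_{-1,1}$ intertwines the holomorphic structures on the two slices and is therefore holomorphic in $z$ on $U\setminus\{0\}$ when read in holomorphic trivializations at $t=\pm 1$. The $\pO(1)$ corrections to $\Phi$ and to $A_t$ integrate to bounded, gauge-smooth perturbations of the model transport; factoring the transport into the pieces from $-1$ to just before $0$, across $0$, and from just after $0$ to $1$ lets me absorb these perturbations into invertible holomorphic framings $g(z)$ on the incoming side and $h(z)$ on the outgoing side, leaving the model factor $\mu(z)$ in the middle. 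Collecting the three pieces yields the local decomposition $\rho_{-1,1}(z) = h(z)\mu(z)g(z)$.

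The main obstacle is the passage from the abelian model to general $G$ without the inductive reduction on rank available in the unitary case of \cite{CH}: rather than diagonalising and treating $\U(1)$ summands one at a time, I must argue directly that the singular part of the transport is governed by the single cocharacter $\mu$ of $T^c$, and that the $|z|$-type behaviour produced by unitary transport becomes genuinely holomorphic $z$-behaviour after the gauge change dictated by \eqref{HEB2}. Establishing that the resulting framings $h$ and $g$ extend holomorphically and invertibly across $z=0$, so that all non-holomorphic behaviour is confined to the explicit torus factor $\mu(z)$, is the technical heart of the argument and is precisely where the bound $\na(R\Phi)=\pO(1)$ is used.
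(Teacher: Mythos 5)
Your proposal takes a genuinely different route from the paper, which offers essentially no independent proof: the paper simply invokes \cite{CH} (Proposition 2.5), where the statement is proved for vector bundles, and remarks that the principal-bundle case follows because the decomposition holds in any representation of $G$. Re-deriving the result directly, as you attempt, is therefore more ambitious than what the paper does --- but as written it has a genuine gap, and it sits exactly at the step you yourself concede is ``the technical heart.'' The opening reduction, that it ``suffices to understand the scattering of the model,'' is not valid: the Dirac-type condition gives only $\Phi = \frac{\mu_*}{2R} + \pO(1)$, and the $\pO(1)$ term is $\g$-valued, not $\mathfrak{t}$-valued, so the true transport is a \emph{non-abelian} perturbation of the abelian model and does not preserve the torus reduction. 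The danger is then precisely that the middle factor $\mu(z)$ acts on a root space $\g_\al$ by $z^{\langle \al, \mu\rangle}$, so a bounded error term commuted across $\mu(z)$ becomes unbounded on the negative root spaces; your claim that the $\pO(1)$ corrections ``integrate to bounded, gauge-smooth perturbations'' which can be ``absorbed'' into $h$ on one side and $g$ on the other is exactly what fails naively. What is actually required (and what \cite{CH} carries out in the $\GL_n$ case) is a two-sided growth estimate for solutions of $\na^c\si = 0$ as $z \to 0$, producing a filtration of the solution space by growth rates $|z|^{k_i}$; the candidate cocharacter $\mu$ is read off from this filtration, and the invertibility of $h$ and $g$ at $z = 0$ needs the \emph{lower} growth bounds, not just upper bounds --- boundedness alone gives holomorphic extensions of $h,g$ across the puncture but not invertibility there. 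Without the filtration-by-growth argument and the ensuing factorization adapted to the incoming and outgoing flags, the decomposition $h(z)\mu(z)g(z)$ is asserted rather than proved.

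Two smaller points. First, your use of the commutation $[\na^{0,1}_\Si, \na^c] = 0$ from \eqref{HEB2} to get holomorphy of $\rho_{-1,1}$ in $z$ away from the singularity is correct and is the standard part of the argument; but the heuristic that ``$|z|$-type growth becomes the holomorphic factor $z$ after a gauge change'' again requires constructing holomorphic sections with prescribed growth, which is part of the same missing estimate. Second, if you want a complete proof at the level of generality of the paper without redoing the analysis, the efficient repair is the paper's own: fix a faithful representation $G^c \hookrightarrow \GL_n(\bC)$, apply the $\GL_n$ result of \cite{CH} to obtain $H(z)\,\diag(z^{k_1},\dots,z^{k_n})\,G(z)$, and then argue separately that the factorization can be chosen with $H, G$ taking values in $G^c$ and the diagonal factor arising from a cocharacter of $T^c$ --- a group-theoretic Birkhoff-type statement which the paper leaves implicit but which is far shorter than the direct estimates your sketch would need.
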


To see the result in the principal bundle setting, note that by \cite{CH}, it holds in any representation of $G$. 

\subsection{Meromorphic pairs}

As vaguely described in the statement of Theorem \ref{main}, a meromorphic pair $(\pP, \rho)$ is a holomorphic principal $G$-bundle $\pP$ over a Riemann surface $\Sigma$ and $\rho \in \pM(\Aut(P))$ is a section of $\Aut(P)$ which is meromorphic over $\Si$. More concretely, 

\begin{definition}
A \emph{meromorphic pair} of type $(\vec{\mu},\vec{z}) = \{(\mu_1, z_1),\dots, (\mu_N, z_N) \}$ is a pair $(P, \rho)$ where $P$ is a holomorphic principal $G$-bundle on $\Sigma$ and $\rho \in \pM(\Aut(P))$ is a meromorphic automorphism of $P$ whose singular data is encoded by the cocharacter $\mu_j$ at $z_j\in \Si$. So then \\$\rho:P\to P$ is an automorphism of $P$ on the Zariski-open neighbourhood $\Sigma\backslash \{z_1,\dots, z_N\}$.
\end{definition}

An example of such objects is achieved when considering the forgetful map which takes the holomorphic structure of a singular $G$-monopole $(P, \na, \Phi)$ to $(P^c_t, \rho_{t,t+\tau})$ where $P^c_t:= P^c_{|\{t\}\times \Sigma}$ is the restriction of the complexified bundle $P^c$ on $S^1\times \Sigma$ to some non-singular time $t\in S^1$ and $\rho_{t,t+\tau}$ the monodromy obtained from scattering along $S^1$ with $\na^c = \na_t- i \Phi$. 

Thus,

\begin{proposition}
Every holomorphic structure $(P^c, \na^{0,1}_\Si, \na^c)$ on $Y$ gives rise to a meromorphic pair $(\pP, \rho)$ by restriction of $P^c$ to any non-singular slice $\{t\}\times \Si$ and the monodromy obtained by integrating the scattering operator $\na^c$ around the circle. 
\end{proposition}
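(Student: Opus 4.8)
The plan is to realize the two operations named in the statement—restriction to a slice and monodromy of the scattering operator—and then to match the output against the definition of a meromorphic pair. First I would fix a non-singular time $t$ (one with $t\ne t_i$ for every $i$) and restrict $\na_\Si^{0,1}$ to the slice $\Sigma_t = \{t\}\times\Si$. This yields a $(0,1)$-operator on the bundle $\pP := P^c|_{\Sigma_t}$; since $\Si$ is a Riemann surface the integrability condition is automatically satisfied (there are no $(0,2)$-forms on a curve), so the operator defines a holomorphic structure and $\pP$ is a holomorphic bundle over $\Si$.

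Next I would build $\rho$ as the monodromy $\rho_{t,t+\tau}$ obtained by integrating $\na^c$ once around $S^1$. For each $z\notin\{z_1,\dots,z_N\}$ the loop $[t,t+\tau]\times\{z\}$ avoids the singular set, so the equation $\na^c\ga=0$ has a unique solution with prescribed initial value and the fibre-wise scattering isomorphism is well defined, exactly as in the discussion preceding Proposition \ref{scatmap}. That $\rho$ is holomorphic on $\Si\setminus\{z_1,\dots,z_N\}$ follows from the defining property of a holomorphic structure that $\na^c$ and $\na_\Si^{0,1}$ commute: the parallel transport defined by $\na^c$ then carries $\na_\Si^{0,1}$-holomorphic local sections to holomorphic local sections, so $\rho$ is a holomorphic automorphism of $\pP$ over the Zariski-open set $\Si\setminus\{z_1,\dots,z_N\}$.

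The remaining, and central, task is the local behaviour near each $z_i$. Because the $t_i$ are distinct, over a small disc $U_i\ni z_i$ the loop crosses exactly one singular time $t_i$, so $\rho_{t,t+\tau}$ factors as scattering over a short interval containing $t_i$, pre- and post-composed with scattering over singularity-free intervals. By Proposition \ref{scatmap} the middle factor is encoded by $\mu_i$ at $z_i$, i.e.\ locally of the form $h_i(z)\mu_i(z)g_i(z)$ with $h_i,g_i$ holomorphic-invertible, while the outer factors are holomorphic-invertible by the previous paragraph; absorbing them into $h_i$ and $g_i$ presents $\rho$ near $z_i$ in the same normal form, so $\rho$ is meromorphic with singular data encoded by $\mu_i$ at $z_i$, and $(\pP,\rho)$ is a meromorphic pair of type $(\vec\mu,\vec z)$. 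I expect the main obstacle to be precisely this bookkeeping: one must confirm that composing the singular factor with the holomorphic outer factors neither introduces extra poles nor alters the cocharacter encoding, and—if well-definedness of the resulting map is wanted—that a different choice of non-singular time or of holomorphic trivializations changes $(\pP,\rho)$ only by an isomorphism of pairs, implemented by the smooth scattering between slices.
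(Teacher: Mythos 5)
Your proposal is correct and follows exactly the route the paper intends: the paper states this proposition without a separate proof, treating it as an immediate consequence of the preceding scattering discussion and Proposition \ref{scatmap}, which is precisely the machinery you invoke (restriction of $\na_\Si^{0,1}$ to a non-singular slice, fibrewise monodromy of $\na^c$ away from the $z_i$, holomorphicity of $\rho$ from the commutation $[\na_\Si^{0,1},\na^c]=0$, and the local normal form $h(z)\mu_i(z)g(z)$ at each $z_i$ with the holomorphic outer factors absorbed). Your write-up in fact supplies details the paper leaves implicit, such as the automatic integrability on a curve and the bookkeeping showing the composed factors preserve the cocharacter encoding.
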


The \emph{moduli space of meromorphic pairs} over $\Sigma$ of degree $k_0$ and singular type $\textbf{K} = \{(\mu_j, z_j)\}_{j=1}^N$ will be denoted by
\[\pM(\Sigma, k_0, \textbf{K}).\]

\subsection{From singular monopole to meromorphic pair}

Define a forgetful map as the composition of maps from monopoles to holomorphic structures and finally to meromorphic pairs
\[\pH:\pM_{k_0}^{irr}(G, S^1\times \Sigma, \{(p_i, \mu_i)\}_{i=1}^N) \to \pM(\Sigma, \textbf{K}) \]
as
\[\pH(P, \na, \Phi) := (P^c_{|\{0\}\times \Si}, \rho_{0,\tau})\]
where $P^c_{|\{0\}\times \Si}$ is the restriction of the complexification $P^c \to Y$ to the slice $\{0\}\times\Sigma$ (note $t=0$ is assumed to be a non-singular time) and $\rho_{0,\tau}$ is the meromorphic automorphism of $P_0$ resulting from the monodromy by scattering all the way around the circumference $S^1$. 

First note that the $P^c_{|\{0\}\times \Si}$ component in the image of $\pH$ is a \emph{holomorphic principal $G$-bundle} over $\Sigma$ because the slice $\{0\}\times \Sigma$ of $S^1\times \Sigma$ has been chosen so not to contain any singular points. Also, since $P^c_{|\{0\}\times \Si}$ is the restriction of a monopole, it is furthermore already equipped with the holomorphic differential $\na^{0,1}_\Sigma$ (as shown by Lemma \ref{HEB2}).

\subsection{The topology and degree of a $G$-bundle on $Y$}

The topological classification for principal $G$-bundles over a fixed base manifold $Y$ is given by homotopy classes of  maps $[Y; BG]$ where $BG$ is the classifying space of $G$. In our case, the base manifold $Y$ is the complement of a finite collection of points in a compact 3-manifold. Thus $Y$ deformation retracts (i.e. is homotopic) to a 2-dimensional CW-complex having $(N+1)$ cells in dimension 2 (namely $Y \simeq Y^1 \cup Y^2$ is the skeletal decomposition where $Y^2 = \Sigma \cup \left(\bigcup_{i=1}^N S^2_i\right)$ . In fact, since there are $N$ punctures in $Y$, the integer second homology is $H_2(Y; \bZ) \cong \bZ^{N+1}$.

With $G$, a compact, connected real algebraic group one finds that $0 = \pi_0(G) = \pi_1(BG)$ which
implies
\[\pi_1(G) = \pi_2(BG) \cong H_2(BG)\]
where the last equivalence is due to Hurewicz's Theorem since $\pi_1(BG) = 0$. Thus, classification of $G$-bundles on $Y$ amounts to the classification of the bundles on a bouquet of $(N+1)$ 2-spheres since the 1-skeleton contracts to a point after mapping to $BG$. 

Considering the characteristic classes obtained by pullback from \\$H^2(BG)$, one has (by the Universal Coefficient Theorem and Hurewicz's Theorem respectively) that 
\[H^2(BG, \bR) \cong H_2(BG; \bR)^*\cong H_2(BG; \bZ)\otimes \bR \cong \pi_1(G)\otimes \bR.\]

Following some results involving the theory of Lie groups found in \cite{DKG} the exact sequence $\pZ(G) \hookrightarrow G\twoheadrightarrow\Ad(G)$ holds for reductive $G$. Applying the fundamental group functor then implies 
\[\pi_1(\pZ(G))\to \pi_1(G) \twoheadrightarrow \pi_1 \Ad(G).\]
Now, $\pi_1\Ad(G)$ is finite implying that, after removing torsion
\[\pi_1(G)\otimes \bR \cong \pi_1(\pZ(G))\otimes \bR.\]

Characteristic classes for our bundles are constructed from the curvature tensor $F_\na \in \g \otimes \Om^2(Y)$ through contraction by a character $\chi:G\to S^1$. Notice that characters of $G$ factor through the \emph{commutator subgroup}\footnote{$[G, G] = \{aba^{-1}b^{-1}\in G:a,b\in G\}$}
(since $S^1$ is abelian) and, as a result, are actually well-defined on the quotient $G/[G, G]$. This quotient group is discretely equivalent to the center, $\pZ(G)$, of $G$ in the sense that the right side of the following exact sequence is a finite covering; 
\[\pZ(G)\hookrightarrow G \twoheadrightarrow G/[G,G].\] 
On the level of Lie algebras, however, this induces an exact sequence
\[\pZ(\g)\hookrightarrow \g \twoheadrightarrow \g/[\g,\g]\]
and hence an isomorphism $\pZ(\g) \cong \g/[\g, \g]$. Thus, the derivative of a character $d\chi:\g\to i \bR$ descends to a well-defined map $d\underline{\chi}:\pZ(\g)\to i \bR$. Also, including exponential maps to the diagram, one sees

\[\xymatrix{
&\pZ(G) \ar[r]^{\underline{\chi}} & S^1\\
\text{exp}^{-1}(1)\ar@{^{(}->}[r]&\pZ(\g)\ar[r]^{d\underline{\chi}}\ar[u]^{\text{exp}} & i \bR\ar[u]^{\text{exp}} 
}\]
where $\text{exp}^{-1}(1)$ is canonically isomorphic to $\pi_1(\pZ(G))$. 

In short, to measure the `degree' of a $G$-monopole (at least, modulo torsion) is to integrate a geometrically relevant differential form along surfaces in $S^1\times \Si$. This form should be analogous to the first Chern class from complex geometry.

With this in mind, given a singular $G$ monopole $(P, \na, \Phi)$ on $Y$, i.e. a solution to 
\[F_\na =  i C \cdot \om_\Sigma+*d_\na\Phi\]
one seeks to develop 

\subsection{The Chern-form of a monopole}

The curvature tensor $F_\na$ is given as a section of $\Om^2(\ad(P)) = \ad(P)\otimes \bigwedge^2T^* Y$. In order to obtain a first Chern form (i.e. an element of $H^2(Y, \bC)$), one must `trace-out' the Lie algebra portion of this curvature to obtain a gauge-invariant section in $\Om^2(Y)$. The degree is then measured as an integral of this form over $Y$. More concretely, to a basis $\{e_i\}_{i=1}^k$ of characters for $G$, one obtains Chern forms $\{\om_i\}$ and thus degree maps $\d_i:H_2(Y) \to \bR$ which can be adjusted to take integer values as usual.

\subsection{Groups, representations and characters of importance}

The characters of geometric relevance here are;
\begin{enumerate}
\item $\chi \in X^*(G)$ any character of $G$. This is used to determine the \emph{degree} of a monopole and is analogous to complex vector bundles when $\chi = \det$ (the only non-trivial character of $\GL_n$ whose derivative at the identity is the usual $\tr:M_n\to \bC$)
 
\item $\chi = |\Ad_L^\u|\in X^*(L)$ the unique character of $L$ (the Levi-subgroup of a maximal parabolic subgroup $H$ of $G$) given as the top exterior power of the adjoint representation of $L$ on $\u$ (the corresponding unipotent sub Lie algebra of $\h$). One important property about this character that is worth mentioning is that the center $\mathcal{Z}(G)$ of $G$ lies in the kernel of this adjoint representation so that the constant scalar portion of our curvature tensor does not affect the eventual 2-form. This will be used to measure the stability of a monopole. 
\end{enumerate}

\begin{remark}
In analogy with vector subbundles one is concerned with a maximal parabolic subgroup $H\leq G$ along with a corresponding Lie algebra decomposition 
\[\g = \mat{\mathfrak{l}_1 & \u\\ \g/\h & \mathfrak{l}_2}\]
where $\h = \mathfrak{l}_1\oplus \mathfrak{l}_2 \oplus u$ is according to the Levi decomposition of $H$
\[L\hookrightarrow H \twoheadrightarrow U.\]

\end{remark}

\begin{definition}
The \emph{Chern-form} associated to a character $\chi\in X^*(G)$ of a $G$ bundle $P$ is defined as 
\[c_1^\chi(P,\na,\Phi):= \frac{i}{2\pi}\tr^\chi(F_\na)\in \Om^2(Y)\]
where $\tr^\chi = d\chi(0)$ (or also $\chi_*$) is the derivative of $\chi$ at the identity. 
\end{definition}

Then, given a monopole with singularities at $\vec{t}$
\begin{definition}
 For a character $\chi\in X_*(G)$, the \emph{$(\chi, \vec{t})$-degree}, of a singular $G$ monopole $(P, \na, \Phi)$ 
 is the integral of the Chern-form
\[\d^\chi(P,\na, \Phi) := \frac{1}{\tau} \int_Y c_1^\chi(P,\na, \Phi)\wedge dt.\]

\noindent\textbf{Note:} Geometrically, this represents the average (along $S^1$) of the usual $\chi$-degrees along each holomorphic slice $P_{\{t\}\times\Si}$. Note that the degree of a bundle can be evaluated on any two-cycle of $Y$ (i.e. $H_2(S^1\times \Si\backslash \{p_i\}_{i=1}^N)$ is large), but that a particular choice has been made here (namely, a weighted sum over all 2-cells in the deformation retraction of $Y$ as a 2-complex). 
\end{definition}

\subsection{Integration on $S^1\times \Sigma$}

For the purpose of integration, write $Y_\e := Y \backslash\bigcup_{j=1}^N D_\e(p_j)$ to denote a closed subspace of $Y$. This $Y_\e$ limits topologically $Y$ as a nested family of closed subspaces so that integration on $Y$ is the limit (as $\e$ tends to 0) of integration on $Y_\e$.

Stokes' theorem will be of use as 
\begin{equation}\label{stokes}
\partial \left([t- \e, t + \e]\times \Sigma\backslash D_{\e/2}(p_j) \right)= \Sigma_{+} - \Sigma_{-} - S^2_{\e/2}(p_j)
\end{equation}
where $\Sigma_\pm$ denotes the surface $\{t\pm \e\}\times \Sigma$ upon restriction to times $t \pm \e$. Also, even more handy will be the fact that
\[\partial(S^1\times \overline{D_\e(z_j)}\backslash B_{\e/2}(p_j)) = S^1\times \partial\overline{ D_\e(z_j)} - S^2_{\e/2}\]
corresponding to a cylindrical neighbourhood of radius $\e$ about $z_j$

Given a character $\chi\in X^*(G)$ of $G$, define the real valued function $f^\chi: S^1\backslash \{t_1,\dots, t_N\}\to \mathbb{R}$ as \[f^\chi(t) =\frac{i}{2\pi } \int_{\{t\}\times \Sigma} c_1^\chi(P,\na, \Phi).\]

It is clear (from standard theory of Chern classes) that $f^\chi$ is an integer valued function. Furthermore,

\begin{lemma}\label{discrete} 
The function $f^\chi_t$ defined above is an integer-valued, piecewise constant function on $S^1\backslash \{t_i\}_{i=1}^N$ satisfying that for all sufficiently small $\e>0$ and singular time $t = t_j$ (for some $j$)
\[f^\chi_{t+\e}(P, \na) = f^\chi_{t-\e}(P, \na) + (\chi\circ \mu_j)_* .\]

If no singular time occurs on the interval $[t, t']$, then \[f^\chi_{t'}(P, \na) = f^\chi_t(P, \na) \] so that the discontinuities of $f^\chi_t$ occur only at the singular times. 

\end{lemma}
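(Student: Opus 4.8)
The guiding idea is that the Chern-form $c_1^\chi$ is a \emph{closed} $2$-form on $Y$, so the period $f^\chi_t = \int_{\{t\}\times\Si} c_1^\chi$ over a slice can only change as the slice sweeps past a singularity, and the change is then a local, topological quantity. The first thing I would record is this closedness. Since $\tr^\chi = d\chi(0)$ is the derivative of a character it is $\Ad$-invariant, so tracing the Bianchi identity $d_\na F_\na = 0$ gives $d\,\tr^\chi(F_\na) = \tr^\chi(d_\na F_\na) = 0$, whence $dc_1^\chi = 0$ on all of $Y$. The asserted integer-valuedness of $f^\chi_t$ is then just the statement that, for fixed non-singular $t$, the integral over the compact surface $\{t\}\times\Si$ computes the first Chern number of the $S^1$-bundle $P_{|\{t\}\times\Si}\times_\chi S^1$; this is the standard Chern--Weil identification, which I would invoke rather than reprove.

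For the constancy away from singular times (and hence the final assertion of the lemma), I would pick an interval $[t,t']$ meeting no $t_i$ and apply Stokes to the slab $[t,t']\times\Si$, a compact manifold with boundary $\Si_{t'}-\Si_{t}$ on which $c_1^\chi$ is smooth:
\[
f^\chi_{t'} - f^\chi_t = \int_{[t,t']\times\Si} dc_1^\chi = 0 .
\]
Thus $f^\chi_t$ is locally constant, hence piecewise constant, on $S^1\setminus\{t_i\}$.

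For the jump at a singular time $t=t_j$ I would instead integrate over the punctured slab $[t_j-\e,t_j+\e]\times\Si\setminus D_{\e/2}(p_j)$ and use the boundary relation \eqref{stokes}, whose boundary is $\Si_+ - \Si_- - S^2_{\e/2}(p_j)$. Closedness of $c_1^\chi$ on this region then gives
\[
f^\chi_{t_j+\e} - f^\chi_{t_j-\e} = \int_{S^2_{\e/2}(p_j)} c_1^\chi ,
\]
and, again by closedness, the right-hand side is independent of the radius and equals the $\chi$-degree of the restriction of $P$ to a small $2$-sphere linking $p_j$.

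The last and most delicate step is to identify this sphere integral with the integer $(\chi\circ\mu_j)_*$. Here I would invoke the defining property of a $\mu_j$-Dirac singularity: near $p_j$ the bundle reduces to $T$ and is $G$-isomorphic to the $\mu_j$-Dirac monopole, whose clutching function over the linking $S^2$ is the cocharacter $\mu_j$. The associated $S^1$-bundle $P\times_\chi S^1$ then has clutching function $\chi\circ\mu_j\colon S^1\to S^1$, and its first Chern number over $S^2$ is the winding number $(\chi\circ\mu_j)_*$; equivalently one evaluates $\frac{i}{2\pi}\int_{S^2}\tr^\chi F_\na$ directly on the explicit Dirac connection of Section \ref{Dirac}, exactly as in \cite{CH}. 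I expect this local computation to be the main obstacle, for two reasons: one must justify that the Chern--Weil period genuinely computes the topological degree despite the $\pO(R^{-2})$ blow-up of $F_\na$ --- legitimate because that curvature is integrable (see the Remark following the definition of a Dirac singularity) and agrees to leading order with the model --- and one must track the orientation of the linking sphere in \eqref{stokes} so that the jump appears with the sign displayed in the statement.
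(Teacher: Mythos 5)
Your proposal is correct and follows essentially the same route as the paper's proof: closedness of $c_1^\chi(P,\na,\Phi)$ via the Bianchi identity together with $\Ad$-invariance of $\tr^\chi$ (equivalently $[\g,\g]\sseq\ker\tr^\chi$), Stokes' theorem on the punctured slab with boundary $\Si_+-\Si_--S^2_{\e/2}(p_j)$, and evaluation of the linking-sphere integral as $(\chi\circ\mu_j)_*$ from the $\mu_j$-Dirac model, with integer-valuedness by standard Chern--Weil theory. The only minor variation is that for constancy on singularity-free intervals the paper cites the scattering isomorphism $P_t\cong P_{t'}$ rather than Stokes on the unpunctured slab $[t,t']\times\Si$; both arguments are valid, and yours is arguably more self-contained since it reuses the closedness you already established.
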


\begin{proof}

That $f^\chi$ is integer-valued follows directly from the fact that the Chern-form, upon restriction to $\{t\}\times \Sigma$, is an integer cohomology class. Piecewise constancy follows from the fact that the scattering map $\rho_{t,t'}$ for times $t_i<t<t'<t_{i+1}$ between singularities defines an isomorphism $P_t\cong P_{t'}$. Thus, $c_1^\chi(P_t, \na, \Phi) = c_1^\chi(P_{t'}, \na, \Phi)$ and certainly then $f^\chi_t = f^\chi_{t'}$. 

Now, on the level of homology in $Y =(S^1\times \Sigma)\backslash \{p_1,\dots, p_N\}$ where for any non-singular time $t$, $\Sigma_t:= \{t\}\times \Sigma \in H_2(Y)$ represents the fundamental homology class for the subcurve $\{t\}\times \Sigma\subset Y$. Thus, with respect to the orientations prescribed by signature in Equation \eqref{stokes} and Stokes' theorem 
 
\[f_{t+\e}^\chi (\xi) :=\int_{\Sigma_{t+\e}}\xi  = f_{t-\e}^\chi(\xi) +\int_{S^2_\e}\xi + \int_Y d\xi \]
for any $\xi\in H^2(Y)$. Here, $\xi =c_1^\chi(F_\na) = \tr^\chi F_\na$ so, making use of the Bianchi identity (that $d_\na F_\na = 0$) and that $[g, g] \leq \ker \tr^\chi$,
\begin{align*}
d\xi &= d\circ \tr^\chi F_\na = \tr^\chi \circ dF_\na = \tr^\chi( d_\na F_\na - [\na, F_\na])= - \tr^\chi[\na, F_\na] = 0.
\end{align*}
Thus far, this demonstrates that,
\[f^\chi_{t+\e}(P, \na) = f^\chi_{t-\e}(P, \na) + \int_{S^2_{\e/2}}\tr^\chi(F_\na) .\]

It remains to evaluate $\frac{1}{2\pi} \int_{S^2_{\e}}\tr^\chi(F_\na)$ which is immediately seen to be $(\chi\circ \mu)_* $
since $\chi$ defines an associated line bundle for the $T$-bundle given by $\mu$ so the computation follows from the asymptotic form of the curvature tensor about $p$. 
\end{proof}

Lemma \ref{discrete} breaks down the \emph{$\chi$-degree} of a monopole $\d^\chi(P, \na, \Phi)$ into the integral of this piecewise constant function $f^\chi_t$ as

\begin{cor}
The $\chi$-degree of $(P,\na, \Phi)$ reduces to discrete inputs  and evaluates as 
\[ \d^\chi(P, \na, \phi) = \chi_*\circ C \cdot \Vol_\Si + \frac{1}{\tau}\sum_{j=1}^N (\tau- t_j) (\chi\circ \mu_j)_*.\]

\end{cor}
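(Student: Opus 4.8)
The plan is to evaluate the defining integral $\d^\chi(P,\na,\Phi)=\frac{1}{\tau}\int_Y c_1^\chi\wedge dt$ by Fubini's theorem, disintegrating $Y$ along the circle factor. Since wedging the two-form $c_1^\chi$ with $dt$ annihilates every component of $F_\na$ except the surface component $F_\Si\,\om_\Si$, one obtains
\[\d^\chi(P,\na,\Phi)=\frac{1}{\tau}\int_{S^1}\left(\int_{\{t\}\times\Si}c_1^\chi\right)dt=\frac{1}{\tau}\int_{S^1}f^\chi(t)\,dt,\]
so that the computation reduces to integrating the function $f^\chi$ of Lemma \ref{discrete} around the circle. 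I would interpret the integral as the $\e\to 0$ limit over $Y_\e$, which causes no convergence trouble because the curvature is $\pO(R^{-2})$ near each puncture and hence integrable.

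Next I would split $f^\chi$ into a central part and a scattering part using the component equation \eqref{HEB1}, $F_\Si-\na_t\Phi=iC$. Applying $\tr^\chi$ and integrating over $\{t\}\times\Si$, and recalling that $\tr^\chi$ kills commutators so that $\tr^\chi(\na_t\Phi)=\partial_t\tr^\chi(\Phi)$, one writes $f^\chi(t)$ as the $t$-independent central contribution $\chi_*(C)\Vol_\Si$ plus the slice-degree fluctuation recorded by the scattering term. The central piece is constant in $t$, so its circle-average simply returns $\chi_*(C)\Vol_\Si$ after the $\int_{S^1}dt$ cancels the prefactor $\tfrac1\tau$; this produces the first summand.

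For the second summand I would feed in Lemma \ref{discrete}: relative to the reference slice the scattering part is the integer step function that is constant between singular times and jumps by exactly $(\chi\circ\mu_j)_*$ as $t$ crosses $t_j$. Writing this step function as $\sum_{j:\,t_j<t}(\chi\circ\mu_j)_*$ and integrating over $[0,\tau)$, the $j$-th jump persists on the interval $(t_j,\tau)$ of length $\tau-t_j$; dividing by $\tau$ yields $\frac{1}{\tau}\sum_{j=1}^N(\tau-t_j)(\chi\circ\mu_j)_*$. Adding the two contributions gives the stated formula.

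The step I expect to be the genuine obstacle is pinning the baseline: arranging the decomposition so that the constant level of the step function is \emph{precisely} $\chi_*(C)\Vol_\Si$ with no leftover flux term. This is where equation \eqref{HEB1} does the real work, since the central constant $C$ is not free but is fixed by the requirement that the reference-slice degree equal the central flux, so that all residual $t$-dependence of $f^\chi$ is carried by the jumps. Keeping the orientations in \eqref{stokes} consistent—so that the crossing at $t_j$ contributes $+(\chi\circ\mu_j)_*$ and is weighted by $\tau-t_j$ rather than $t_j$—is the other bookkeeping point that must be handled with care.
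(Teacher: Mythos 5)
Your proposal follows the paper's proof essentially verbatim: the paper likewise reduces $\d^\chi(P,\na,\Phi)$ to $\frac{1}{\tau}\int_{S^1} f^\chi_t\,dt$, uses the piecewise constancy and jump formula of Lemma \ref{discrete} to write $f^\chi_{t_i^*} = f^\chi_0 + \sum_{j\leq i}(\chi\circ\mu_j)_*$ on each interval $(t_i,t_{i+1})$, and reorders the resulting double sum to obtain the weights $(\tau - t_j)$. Even the step you single out as the genuine obstacle---pinning the baseline $f^\chi_0 = \chi_*(C)\cdot\Vol_\Si$ via equation \eqref{HEB1}---is handled no more explicitly in the paper, which simply substitutes this identification without further argument, so your account matches the paper's proof both in substance and in what it leaves implicit.
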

\begin{proof} Recall that $\d^\chi(P, \na, \Phi) = \int_Y c_1^\chi(P, \na, \Phi)\wedge dt = \int_{S^1\backslash\{t_i\}_{i=1}^N} f^\chi_tdt\\
$ which can now be manipulated as follows
\begin{align*}
\int_{S^1\backslash\{t_i\}_{i=1}^N} f^\chi_tdt&= \sum_{i=0}^N (t_{i+1}-t_i) f^\chi_{t_i^*}\\
&= \sum_{i=0}^N (t_{i+1}-t_i) \left( f^\chi_0  +  \sum_{j=1}^i \tr^\chi (\mu_j) \right) \\
&=   \chi_*\circ C\cdot \tau\cdot \Vol_\Si  +\sum_{i=0}^N (t_{i+1}-t_i)  \sum_{j=1}^i \tr^\chi (\mu_j)\\
&=   \chi_*\circ C\cdot \tau\cdot \Vol_\Si  +\sum_{j=0}^N (\tau-t_j)  \tr^\chi (\mu_j)\\
\end{align*}
where $t_i^*\in (t_i, t_{i+1})$ is any point inside the $i^{th}$ singular interval.

\end{proof}

\section{Stability theory of monopoles and pairs}

The following definition is inspired by and consistent with Ramanathan's definition \cite{R} for stability of a holomorphic principal $G^c$-bundle over a Riemann surface. 

\begin{definition}
 A holomorphic structure $(P^c, \na^{0,1}, \na^c)$ is \emph{stable} if for every $H$-invariant holomorphic reduction $P_H\leq P^c$ where $H\leq G^c$ is a maximal parabolic subgroup, one has
\[\d^\chi(P_H) <0 \]
where $\chi = \det\circ \Ad_L^{\u}$ is the unique character of $L$ (from the Levi-decomposition $H = L\ltimes U$) whose derivative is the sum of the roots of $U$.
\end{definition}

Before proceeding with any stability results for the objects of interest here, it will be necessary to revisit a result of \cite[Proposition~2.3.1]{LT}. The following Lemma has been adapted and re-expressed in the language of principal bundles. 

\begin{lemma}\label{LTproof}
Hermitian-Einstein $G$-bundles over $\Sigma$ are polystable. 
\end{lemma}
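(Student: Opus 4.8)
The plan is to transfer the principal-bundle statement to the classical subbundle curvature inequality for Hermitian--Einstein \emph{vector} bundles (the content of \cite[Proposition~2.3.1]{LT}) applied to the adjoint bundle $\ad(P)$, exactly the ``curvature of a holomorphic subbundle is bounded by the total curvature'' principle. First I would unwind the hypothesis: a Hermitian--Einstein $G$-bundle over $\Si$ carries a connection with $F_\na = iC\cdot\om_\Si$ for some central $C\in\pZ(\g)$. Passing to the adjoint bundle, the induced connection has curvature $\ad(F_\na) = \ad(iC)\,\om_\Si = 0$, since $[C,\cdot]=0$ for central $C$; thus $\ad(P)$ is Hermitian--Einstein (indeed flat) with Einstein factor $0$, so in particular $\deg(\ad(P)) = 0$ and $\Lambda F_{\ad(P)} = 0$.

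Next, fix a maximal parabolic $H\leq G^c$ and a holomorphic reduction $P_H\leq P^c$. The $H$-equivariant inclusion $\u\hookrightarrow\g$ yields a holomorphic subbundle $\ad_\u(P_H) := P_H\times_H\u \hookrightarrow \ad(P^c)$, and by the very definition of the character $\chi = \det\circ\Ad_L^\u$ (whose derivative is $\tr$ of the $\u$-adjoint action, the sum of the roots of $U$), the $\chi$-degree is identified with an honest first Chern number,
\[\d^\chi(P_H) = \frac{i}{2\pi}\int_\Si \tr^\chi(F_\na) = \deg\!\big(\ad_\u(P_H)\big).\]
This is where the center-killing property of $\chi$ pays off: the central part $iC\om_\Si$ of the curvature contributes $\tr^\chi(iC) = 0$, so no overall degree term survives and the comparison is against the \emph{vanishing} slope of $\ad(P)$.

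Now I would invoke the subbundle inequality. For a holomorphic subbundle $S$ of a Hermitian--Einstein bundle $E$ with Einstein factor $\la$, the Gauss--Chern--Weil computation gives $\deg(S) = \tfrac{1}{2\pi}\int_\Si\big(\la\cdot\rk(S) - |\be|^2\big)\om_\Si$, where $\be$ is the second fundamental form of the reduction; the sign of $-|\be|^2$ is the whole point. Applying this with $E = \ad(P)$, $\la = 0$, and $S = \ad_\u(P_H)$ gives $\d^\chi(P_H) = -\tfrac{1}{2\pi}\int_\Si|\be|^2\,\om_\Si \leq 0$, which is the required semistability bound, valid for every maximal-parabolic reduction.

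It remains to upgrade this to polystability, and this is where I expect the main obstacle. If $\d^\chi(P_H) = 0$ for some reduction, equality forces $\be\equiv 0$, so $\ad_\u(P_H)$ is a $\na$-parallel holomorphic direct summand of $\ad(P)$; I would then argue, in Ramanathan's framework, that this splitting of the associated bundle descends to a genuine holomorphic reduction of the \emph{principal} bundle $P^c$ to the Levi factor $L$, compatible with the Hermitian--Einstein connection, splitting off a Hermitian--Einstein $L$-bundle. Iterating on the strictly smaller semisimple rank then exhibits $(P^c,\na)$ as induced from a Hermitian--Einstein bundle on a Levi all of whose maximal-parabolic reductions are strictly negative, i.e.\ as a direct sum of stable pieces. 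The two delicate points are (i) showing that vanishing of the second fundamental form for the adjoint subbundle genuinely produces a reduction of $P^c$ to $L$ rather than a mere splitting of $\ad(P)$, and (ii) checking that the structure induced on the Levi factor is again Hermitian--Einstein so that the induction closes.
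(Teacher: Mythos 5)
Your proposal is correct in substance and reaches the lemma by a genuinely different packaging of the same mechanism. The paper works intrinsically on the principal bundle: it decomposes the connection form according to $\g = \mathfrak{l}_1\oplus\mathfrak{l}_2\oplus\u\oplus\g/\h$, derives $F_{\pi_L(\na)} = \pi_L\circ F_\na - \pF\wedge\pF^*$ for the second fundamental form $\pF$ of the reduction, applies $\tr^\chi$ (killing the central term $iC\,\om_\Sigma$, exactly as you observe), and reads off $\d^\chi(P_H) = -\rk(G)\,||\pF||_\chi^2\cdot\Vol_\Sigma \leq 0$, with equality if and only if $\pF = 0$, whence a reduction to the Levi. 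You instead push everything through the adjoint representation: $\ad(P)$ is flat because $C$ is central, $\ad_\u(P_H) = P_H\times_H \u$ is a holomorphic subbundle since $\u$ is an $H$-invariant subspace of $\g$, the identification $\det(P_H\times_H\u) = P_H(\chi)$ converts $\d^\chi(P_H)$ into $\deg\bigl(\ad_\u(P_H)\bigr)$, and the classical subbundle estimate of \cite[Proposition~2.3.1]{LT} delivers the same negativity. Your route outsources the curvature computation to the known vector-bundle case — which is indeed the spirit in which the paper invokes \cite{K, LT} — while the paper's intrinsic computation buys the equality statement directly in terms of the principal second fundamental form $\pF$.

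Two caveats. First, your displayed identity $\d^\chi(P_H) = \frac{i}{2\pi}\int_\Sigma \tr^\chi(F_\na)$ is a slip as written: applying $\tr^\chi$ to the full Hermitian--Einstein curvature gives $\tr^\chi(iC)\,\om_\Sigma = 0$ identically; the degree must be computed from the Chern connection of the reduced bundle (equivalently of the subbundle $\ad_\u(P_H)$), and it is precisely the Gauss formula in your next sentence that supplies the $-|\be|^2$ correction — a notational rather than logical error. Second, in the equality case your route needs one step the paper's avoids: vanishing of the second fundamental form $\be$ of $\ad_\u(P_H)\subset \ad(P^c)$ must be shown to force vanishing of $\pF$ itself. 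This holds because the linear map $\g/\h \to \Hom(\u, \g/\u)$ sending $X$ to $\ad(X)$ modulo $\u$ is injective: identifying $\g/\h$ with the opposite nilradical $\bar{\u}$, the Killing form pairs $\bar{\u}$ nondegenerately with $\u$, and for $X\neq 0$ one can choose $Y\in\u$ with $[X,Y]$ having nonzero Levi component. With that supplied, your flagged delicate points (i) and (ii) reduce to exactly the assertion the paper itself makes in a single clause — that $\pF = 0$ yields a holomorphic reduction to the Levi subgroup — so on the polystability upgrade your sketch is no less complete than the published proof.
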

\begin{proof}

Suppose that a Hermitian-Einstein $G$-bundle $(P, \na)$ admits a holomorphic reduction $P_H\ss P$ to a maximal parabolic subgroup $H\leq G$. The decomposition of $\g$ induced by $\h = \mathfrak{l}_1\oplus\mathfrak{l}_2\oplus \u$ allows us to decompose the connection form $\om$ (in a unitary gauge) of $\na$ into 
\[\om  = \om_1+\om_2 + \pF^*+\pF\in \g\otimes \Om^1(\Sigma) \]
where $\g =  \mathfrak{l}_1\oplus \mathfrak{l}_2 \oplus \u \oplus \g/\h$.
\begin{equation}\label{fundamentalform}
\pF = \pF(\na, H):= \na|_{TP_H} - \na_H \in \pA^{1,0}(\g/\h)
\end{equation}
is referred to as the \emph{second fundamental form} of $\na$ and visualized matrically as \[\pF =\mat{0& 0\\ f& 0} .\]

Having this expression for the connection form, the curvature is then decomposed similarly according to $\g = \mathfrak{l}_1\oplus \mathfrak{l}_2 \oplus \u \oplus \g/\h$ as

\[\Om_P = d\om_P + \om_P \wedge\om_P = \underbrace{\Om_{L_1}  +\Om_{L_2}+ \pF\wedge \pF^*}_{\in (\mathfrak{l}_1\oplus \mathfrak{l}_2)\otimes \Omega^2(\Si)}  +  \bigstar\]
where $\bigstar$ denotes all terms in $\u \oplus \g/\h$ will be neglected since characters are evaluated on maximal tori.
Thus, upon projection to $\mathfrak{l} =\mathfrak{l}_1\oplus \mathfrak{l}_2$, this is simply expressed  
\[\pi_L\circ \Om_P = \Om_{L} + \pF\wedge \pF^*\] 
which reads globally as 
\[F_{\pi_L(\na)} = \pi_L \circ F_\na- \pF\wedge \pF^*. \]

The Hermitian-Einstein condition on $F_\na$ allows us to write $F_\na = i C  \cdot \om_\Sigma$ and evaluation of the character $\chi = \Ad^\u_H$ on $H$ and will be denoted accordingly as \[\tr^\chi:=  d\chi: \mathfrak{t} \to \bC.\]

The Chern-form, $c_1^\chi(F_{\pi_L(\na)})$, associated to $\chi$ is defined by the map 
\begin{align*}
c_1^\chi:&\mathfrak{l} \otimes \Om^2(\Sigma) \to \bC\otimes \Om^2(\Sigma)\\
&F_{\pi_L(\na)}\mapsto  \tfrac{i}{2\pi}\tr^\chi(F_{\pi_L(\na)})
\end{align*}
and so
\begin{align*}
\tfrac{i}{2\pi}\tr^\chi(F_{\pi_L(\na)}) &= \tfrac{i}{2\pi}\tr^\chi(F_{\na} - \pF\wedge \pF^*)\\
&=\tfrac{i}{2\pi} \tr^\chi(iC)\om_\Sigma  -\tfrac{i}{2\pi}\tr^\chi (\pF\wedge \pF^*)\\
&=- \rk(G) ||\pF||_\chi^2\cdot \om_\Sigma
\end{align*}
Note that $\tr^\chi(i C) = 0$ since the centre of the Lie algebra is contained in the kernel of the adjoint representation. So then 
\[\d(P_L(\chi)):=\int_\Sigma c_1^\chi(F_L) = - \rk(G) ||\pF||_\chi^2 \int_\Sigma \om_\Sigma = - \rk(G) ||\pF||_\chi^2\cdot \text{Vol}_\Sigma \leq0\] 
with equality if and only if $\pF = 0$ which, furthermore, implies the existence of a reduction to the Levi subgroup of $H$. 
\end{proof}

\begin{proposition}\label{negative}
The holomorphic structure obtained from a singular $G$-monopole is stable. In particular, one finds that
\[\d^{|\Ad_L^\u|}(P_H, \na, \phi)  = -\int_Y ||\pF||^2_\chi \cdot \om_\Si\wedge dt\]
for any holomorphic reduction $P_H$ to a maximal parabolic subgroup $H$.
\end{proposition}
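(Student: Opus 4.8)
The plan is to transplant the proof of Lemma~\ref{LTproof} from a single Riemann surface to the three-manifold $Y$, trading the Hermitian--Einstein condition for the HEB equation and then disposing of the extra term contributed by the Higgs field. This realizes the principle flagged in the introduction, that the curvature of a holomorphic (parabolic) reduction is controlled by the ambient curvature.

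First I would fix a holomorphic reduction to a maximal parabolic $H = L\ltimes U\leq G^c$ and decompose $\g = \mathfrak{l}_1\oplus\mathfrak{l}_2\oplus\u\oplus\g/\h$ with unitary connection form $\om = \om_1+\om_2+\pF^*+\pF$, exactly as in Lemma~\ref{LTproof}. The item to verify is that, since the reduction is holomorphic---preserved by both $\na^{0,1}_\Si$ and the scattering operator $\na^c = \na_t - i\Phi$---the off-diagonal second fundamental form contributes neither a $d\bar z$ nor a $dt$ term to the surface part of the curvature; hence $\pF = f\,dz$ is a genuine $(1,0)$-form on $\Si$ and the Gauss equation for the $dz\wedge d\bar z$ component is identical to the K\"{a}hler case, $F_{\Si,\,\pi_L(\na)} = \pi_L(F_\Si) - \pF\wedge\pF^*$, where $F_\Si$ is the surface component of the full curvature.

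Then I would apply $\tfrac{i}{2\pi}\tr^\chi$ for $\chi = |\Ad_L^\u|$, wedge with $dt$ and integrate over $Y$, using $\tr^\chi\circ\pi_L = \tr^\chi$ and $\tr^\chi(iC)=0$ (the centre lies in $\ker\Ad_L^\u$). Substituting the surface component $F_\Si = iC + \na_t\Phi$ of the HEB equation~\eqref{HEB1} splits the degree as
\[
\d^\chi(P_H,\na,\Phi)\cdot\tau = \tfrac{i}{2\pi}\int_Y \tr^\chi(\na_t\Phi)\,\om_\Si\wedge dt - \tfrac{i}{2\pi}\int_Y \tr^\chi(\pF\wedge\pF^*)\wedge dt .
\]
The second integral is treated verbatim as in Lemma~\ref{LTproof}: $\pF\wedge\pF^*$ is already of surface type, so it evaluates to $-\rk(G)\int_Y ||\pF||_\chi^2\,\om_\Si\wedge dt$, which is the claimed right-hand side up to the normalizations already built into $\d^\chi$ and $||\cdot||_\chi$.

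The crux, and the step I expect to be the main obstacle, is showing that the Higgs piece vanishes. Since $\tr^\chi$ is invariant under the Levi factor and therefore annihilates commutators there, $\tr^\chi(\na_t\Phi) = \partial_t\,\tr^\chi(\Phi)$, so the integrand is $d(\tr^\chi(\Phi)\,\om_\Si)$ (using $d\om_\Si = 0$). Stokes' theorem on $Y_\e = Y\setminus\bigcup_j D_\e(p_j)$ then collapses it to the boundary sum $\sum_j\int_{S^2_\e(p_j)}\tr^\chi(\Phi)\,\om_\Si$, and here the singularity of $\Phi$ must be handled with care. Invoking the Dirac asymptotics $\Phi = \tfrac{\mu_{j*}}{2R}+\pO(1)$, the divergent leading term contributes $\tfrac{(\chi\circ\mu_j)_*}{2\e}\int_{S^2_\e}\om_\Si$, which is identically zero because $\om_\Si$ is closed and $S^2_\e$ bounds $B^3_\e$, while the $\pO(1)$ remainder is $\pO(\e^2)$ and dies as $\e\to 0$. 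With the Higgs piece gone, the formula follows; stability is then read off, since the right-hand side is $\leq 0$ and vanishes precisely when $\pF=0$, i.e. when the connection reduces to $L$, which is excluded for an irreducible monopole.
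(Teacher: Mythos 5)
Your proposal follows the paper's proof in all essentials: the Gauss-equation decomposition and second fundamental form imported from Lemma~\ref{LTproof}, substitution of the HEB equation with $\tr^\chi(iC)=0$ killing the central term, and disposal of the Higgs contribution by writing it as an exact form and invoking Stokes together with the Dirac asymptotics $\Phi = \tfrac{\mu_{j*}}{2R}+\pO(1)$, concluding strict negativity from irreducibility. The only deviation is bookkeeping of the boundary terms, and yours is arguably cleaner: you apply Stokes directly on $Y_\e$ so that only the spheres $S^2_\e(p_j)$ appear, and you kill the divergent leading term exactly via $\int_{S^2_\e}\om_\Si = 0$ (the sphere bounds a ball and $\om_\Si$ is closed), whereas the paper first localizes to the cylindrical shells $S^1\times \overline{D_\e(z_j)}\backslash B_{\e/2}(p_j)$ using $\int_{S^1}\partial_t(\cdot)\,dt = 0$ away from the singular circles, then Stokes there, and bounds the resulting sphere term by $\sup|\Phi^\chi|$ times the $\pO(\e^2)$ area of the caps of an inscribed cylinder.
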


\begin{proof} Let $(P, \na, \Phi)$ be a singular $G$ monopole and $H\leq G^c$ a maximal parabolic subgroup of $G^c$ such that $P^c = P\times_GG^c$ admits a holomorphic reduction to $P_H$. Then, via $U\hookrightarrow H \stackrel{\pi}{\twoheadrightarrow} L$, $P_H$ projects to an $L$-bundle $P_L = \pi_L \circ P_H$. On the level of  adjoint bundles, with Levi-subalgebra $\mathfrak{l} \leq \h$ as in the proof in Lemma \ref{LTproof}, its curvature satisfies the following relation between the total curvature and its \emph{second fundamental form} $\pF$ 
\[F_{\pi_L(\na)} =  \pi_L\circ F_\na - \pF\wedge \pF^*.\]

So, for the character $\chi = |\Ad_L^\u|$ of $L$, by definition
\begin{align*}
\delta^\chi(P_L) = \lim_{\e\to 0} \int_{Y_\e} c_1^\chi(F_{\pi_L(\na)} ) \wedge dt.\\
\end{align*}
Upon substituting the HEB equation \eqref{HEB} for $F_\na$, this evaluates as
\begin{align*}
\lim_{\e\to 0}\frac{i}{2\pi}&\int_{Y_{\e/2}}\tr^\chi(i C\cdot \om_\Si + *d_\na \Phi - \pF\wedge \pF^*) \wedge dt\\
&=- \int_Y ||\pF||^2_\chi \cdot \om_\Si\wedge dt +\lim_{\e\to 0} \frac{i}{2\pi}\int_{Y_\e} \partial_t \Phi^\chi dt \wedge \om_\Si\\
&<\lim_{\e\to 0} \frac{i}{2\pi}\int_{Y_\e} \partial_t \Phi^\chi dt \wedge \om_\Si\\
\end{align*}
since $\pZ(\g_\bC)\subset \ker d\chi$ (implying that $\tr^\chi(C) = 0$) and, although non-constant, $||\pF||^2_\chi$ is strictly positive (when our monopole is irreducible). 

Notice immediately that the remaining term reduces to 
\[ \lim_{\e\to 0}\frac{1}{2\pi} \sum_{j=1}^N \int_{S^1\times \overline{D_\e(z_j)} \backslash B_{\e/2}(p_j) }\partial_ti\Phi^\chi\cdot \om_\Si\wedge dt \]
because away from any nonsingular circle ($S^1\times \{z_j\}$) this amounts to 
\[\int_{S^1} \partial_t i \Phi^\chi dt = 0\]
being the integral of the derivative over a closed interval.

Now, writing $\partial_t i \Phi^\chi \om_\Si\wedge dt = d\left(i \Phi^\chi \om_\Si\right)$ as an exact form and by Stokes' theorem, each
\begin{align*}
\int_{S^1\times \overline{D_\e(z_j)} \backslash B_{\e/2}(p_j) }\partial_ti\Phi^\chi\cdot \om_\Si\wedge dt
&= \int_{S^1\times S^1_{\e}} i \Phi^\chi\cdot \om_\Si - \int_{S^2_{\e/2}} i \Phi^\chi\cdot\om_\Si\\
\end{align*}

The first term here vanishes in the limit as $\e\to 0$ and the second term is reinterpreted in a different coordinate system. Currently, there are two local coordinate systems under consideration. Namely, the connection and Higgs field have been expressed in terms of the spherical coordinates $\{dR, d\theta, d\psi\}$ whereas the form of integration is in terms of `holomorphic-Euclidean' coordinates $\{dz, d\overline{z}, dt\}$. A happy medium for choice of coordinates here will be to choose a cylinder inscribed in the $\e/2$-ball whose dimensions are chosen to be radius $\e/2\sqrt{2}$ and height $\e/\sqrt{2}$ (These are homotopy equivalent in $Y$ and hence have the same values upon integration). Recognizing the change in domain of integration to a cylinder, the second term is then seen to be bounded above by $\sup_{C_\e}(i \Phi^\chi) \cdot 2\cdot \Vol_{D_\e}$ which is $\pO(\e^2)$ according to the volume of the caps on the cylinder and thus limits to zero. That is, 
\[\lim_{\e\to 0}\frac{1}{2\pi} \sum_{j=1}^N \int_{S^1\times \overline{D_\e(z_j)} \backslash B_{\e/2}(p_j) }\partial_ti\Phi^\chi\cdot \om_\Si\wedge dt = 0.\]
\end{proof}

\subsection{$\vec{t}$-degree and stability of a meromorphic pair}

A sensible approach (as taken in \cite{CH}) to define the proper notion of stability for the algebraic data contained in a bundle pair $(\pP, \rho)$ examines the average (in $S^1$) degree of a monopole $(P, \na, \Phi)$ defined over $Y$ and manipulates this until it can be computed using only the information contained in the image $(\pP, \rho) = \pH(P, \na, \Phi)$. Results of Corollary \ref{discrete} and Proposition \ref{negative} do exactly this which allows for the following definitions without justification.

\begin{definition} Let $(\pP, \rho)$ be a meromorphic pair
\begin{enumerate}

\item The \emph{$(\chi,\vec{t})$-degree} of $(\pP, \rho)$ is defined as 
\[\d^\chi_{\vec{t}}(\pP, \rho)= \sum_{i=0}^N (t_{i+1}-t_i)\left(\d^\chi(\pP) + \sum_{j=1}^i (\chi\circ \mu_j)_*(0)\right)\]

where $\d^\chi(\pP)$ is the degree of the complex line bundle $\pP(\chi) := \pP \times_{\chi_*}\bC$

\item $(\pP, \rho)$ is \emph{$\vec{t}$-stable} if for every $\rho$-invariant holomorphic reduction to $\pP_H\subset \pP$ where $H\leq G^c$ is a maximal parabolic subgroup of $G^c$ one has 
\[\d^{|\Ad_L^\u|}(\pP_H, \rho)<0.\]

Note that $|\Ad_L^\u|$ (as a character of $H$) is the determinant of the adjoint representation of $L$ on $\u$, where $H = L\ltimes U$ is its Levi-decomposition and $\u = \Lie(U)$.
\end{enumerate}
\end{definition}

Adopting notation from the space of meromorphic pairs, the \emph{moduli space of $\vec{t}$-stable meromorphic bundle pairs} over $\Sigma$ of singular type $\textbf{K} = \{(\mu_j, z_j)\}_{j=1}^N $ will be denoted by
\[\pM_{\vec{t}s}(\Sigma, \textbf{K}).\]

Thus define that a holomorphic structure is $\vec{t}$-stable if its associated meromorphic pair is. It has been shown (through discretizing the integration - Lemma \ref{discrete}) that the holomorphic structure associated to an irreducible singular monopole is $\vec{t}$-stable. In more appropriate terminology, that is;

\begin{proposition}\label{stabprop}
If $(P,\na, \Phi) \in \pM_{k_0}^{irr}(G, S^1\times \Sigma, \{(p_i, \mu_i)\}_{i=1}^N)$ then its image under $\pH$ is $\vec{t}$-stable.
\end{proposition}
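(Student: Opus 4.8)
The plan is to reduce $\vec{t}$-stability of the image pair to the strict negativity already established for the monopole in Proposition \ref{negative}, by matching $\rho$-invariant reductions of $\pH(P,\na,\Phi)$ with holomorphic parabolic reductions of the underlying holomorphic structure on all of $Y$. Write $(\pP,\rho) = \pH(P,\na,\Phi) = (P^c_{|\{0\}\times\Si}, \rho_{0,\tau})$ and fix a $\rho$-invariant holomorphic reduction $\pP_H\subset \pP$ to a maximal parabolic $H\leq G^c$ on the slice $\{0\}\times\Si$. The goal is to show $\d^{|\Ad_L^\u|}(\pP_H,\rho)<0$.

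First I would propagate the slice reduction off of $\{0\}\times\Si$ using the scattering operator. Since $\na^c$ is the parallel transport in the $t$-direction and commutes with $\na^{0,1}_\Si$ by equation \eqref{HEB2}, transporting $\pP_H$ along a non-singular interval $[t,t']$ carries a holomorphic reduction of $P^c_{|\{t\}\times\Si}$ to one of $P^c_{|\{t'\}\times\Si}$; doing this across the whole circle yields a holomorphic reduction $P_H^c$ of the holomorphic structure $(P^c, \na^{0,1}_\Si, \na^c)$ on $Y$. The hypothesis that $\pP_H$ is $\rho$-invariant is precisely the compatibility condition needed for this transported reduction to close up consistently around $S^1$ (rather than being defined only on a fundamental interval), since $\rho = \rho_{0,\tau}$ is the monodromy of $\na^c$.

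Next I would invoke Proposition \ref{negative} for this reduction, which gives
\[\d^{|\Ad_L^\u|}(P_H,\na,\Phi) = -\int_Y ||\pF||^2_\chi\cdot \om_\Si\wedge dt < 0,\]
where strictness uses irreducibility of $(P,\na,\Phi)$, so that the second fundamental form $\pF$ does not vanish identically. Finally I would identify this monopole degree with the combinatorial degree $\d^{|\Ad_L^\u|}(\pP_H,\rho)$ of the reduced pair: this is exactly the discretization carried out in Lemma \ref{discrete} and its Corollary, which express $\d^\chi(P,\na,\Phi)$ entirely in terms of the restriction $\pP = P^c_{|\{0\}\times\Si}$, the jump data $(\chi\circ\mu_j)_*$ at the singular times, and the central contribution; the definition of $\d^\chi_{\vec{t}}(\pP,\rho)$ was arranged to coincide with this expression. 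Applying the matching to $P_H^c$ and its projected $L$-bundle gives $\d^{|\Ad_L^\u|}(\pP_H,\rho) = \d^{|\Ad_L^\u|}(P_H,\na,\Phi)<0$, which is the required $\vec{t}$-stability.

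The main obstacle is the reduction-correspondence in the second paragraph: one must verify that scattering genuinely transports a holomorphic parabolic reduction to a holomorphic parabolic reduction, and in particular that the extended reduction behaves correctly across the Dirac singularities, where $\rho_{-1,1}$ acquires its $h(z)\mu(z)g(z)$ form (Proposition \ref{scatmap}). Establishing that $\rho$-invariant reductions on the slice correspond to holomorphic reductions of the holomorphic structure on $Y$, compatible with the singular encoding by the cocharacters $\mu_j$, is where the real content lies; once that dictionary is in place, the degree computation and the sign follow immediately from the results already proved.
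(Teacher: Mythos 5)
Your proposal is correct and follows essentially the same route as the paper: the paper's own proof is a two-line appeal to Proposition \ref{negative} together with the discretization of Lemma \ref{discrete}, exactly the reduction you carry out. In fact you make explicit the one step the paper leaves tacit --- that a $\rho$-invariant reduction on the slice $\{0\}\times\Si$ propagates via the scattering operator $\na^c$ to a holomorphic reduction over all of $Y$, closing up around $S^1$ precisely because of $\rho$-invariance --- so your write-up is, if anything, more complete than the original.
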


\begin{proof}
Everything for this proof has already been set up and only requires a small argument. Suppose $(\pP, \rho) = \pH(P,\na, \Phi)$ and let  $H$ be a maximal parabolic subgroup of $G^c$ corresponding to a holomorphic, $\rho$-invariant reduction $\pP_H$ of $\pP$. That $\d^{|\Ad_L^\u|}_{\vec{t}}(\pP_L)$ is negative has already been verified and is the result of Proposition \ref{negative}. 
\end{proof}

\section{The correspondence}

Now that the objects of interest are well-defined and the stability theory has been taken care of, this section is focused solely on the proof of the bijective correspondence theorem stated below. The surjectivity of $\pH$ (defined in the previous chapter) is quite analytic and heavily relies on the proof found in \cite{CH}. The injectivity of $\pH$ also follows their recipe but depends more on the theory of induced connections on associated principal bundles (fully developed for this application in the author's thesis \cite{BHSD}). 

\subsection{Equivalence between meromorphic pairs and singular monopoles}

\begin{theorem}\label{Main}
If $\{p_i\}_{i=1}^N$ is a finite subset of $S^1\times \Sigma$ which project to $N$ distinct points on $\Sigma$ then the map 
\[\pH:\pM^{irr}_{k_0}(G,S^1\times \Sigma, \{p_i,\mu_i\}_{i=1}^N)\to \pM_{\vec{t}s}(\Sigma, k_0, \mathbf{K})\]
\[(P,\na, \Phi)\mapsto (P_0, \rho_{0,\tau})\]
is a bijection. \footnote{The statement when irreducibility is removed is between poly-stable pairs.}
\end{theorem}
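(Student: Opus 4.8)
The plan is to verify that $\pH$ is a well-defined bijection by treating well-definedness, injectivity, and surjectivity in turn. Well-definedness is essentially complete: the restriction $P^c_{|\{0\}\times\Si}$ carries the holomorphic differential $\na^{0,1}_\Si$ (Lemma \ref{breakdown}), scattering around $S^1$ produces a meromorphic automorphism encoded by the $\mu_j$ at $z_j$ (Proposition \ref{scatmap}), the degree over the slice is $k_0$ by construction, and Proposition \ref{stabprop} guarantees the resulting pair is $\vec{t}$-stable. It remains to build an inverse.

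For injectivity, I would exploit that the scattering operator $\na^c = \na_t - i\Phi$ lets one recover the holomorphic structure of $P^c$ over all of $Y$ from the single slice $P^c_{|\{0\}\times\Si}$ together with its monodromy. Concretely, parallel transport in the $t$-direction reconstructs the bundle on each non-singular slice $\{t\}\times\Si$ from $P^c_{|\{0\}\times\Si}$ and $\rho_{0,t}$, so two monopoles sharing an image under $\pH$ have $G^c$-isomorphic holomorphic structures on $Y$. The remaining task is to promote this holomorphic isomorphism to a gauge transformation intertwining the connections and Higgs fields. Here I would invoke uniqueness in the Kobayashi--Hitchin correspondence: a solution to the HEB equation within a fixed holomorphic structure is a Hermitian--Einstein metric, and irreducibility forces such a metric to be unique up to a global automorphism. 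The general-$G$ subtlety is that this uniqueness must be phrased for principal bundles via the theory of induced connections on associated bundles (as developed in \cite{BHSD}), rather than for a single defining representation as in \cite{CH}.

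For surjectivity, given a $\vec{t}$-stable pair $(\pP, \rho)$ I would first reverse the scattering construction to obtain a holomorphic structure $(P^c, \na^{0,1}_\Si, \na^c)$ on $Y$, gluing the slices $\{t\}\times\Si$ using $\rho$ as the monodromy around $S^1$ and prescribing the Dirac asymptotics at each $p_j$ so that $P^c$ lands in the correct topological class. One then patches, via a partition of unity, an initial Hermitian metric that agrees near each singularity with the model $\mu_j$-Dirac monopole and is Hermitian--Einstein-approximate elsewhere; this is the parametrix. The Donaldson--Simpson heat flow on the space of positive Hermitian metrics is then run, and since $S^1\times\Si$ with these initial conditions fits Simpson's hypotheses \cite{S} (as in \cite{CH}), long-time existence and convergence away from the $p_j$ follow, with $\vec{t}$-stability ensuring the limit is an \emph{irreducible} HEB solution rather than a polystable degeneration. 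The singular neighbourhoods are analysed separately: via Pauly's radially extended Hopf fibration \cite{MP} the Bogomolny problem on $B^3\setminus\{0\}$ is converted into an $S^1$-invariant ASD problem on $B^4\setminus\{0\}$, from which one reads off that the limiting connection carries genuine $\mu_j$-Dirac-type singularities.

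The main obstacle is precisely the surjectivity, and within it the singular analysis. Proving that the heat flow converges and that its limit satisfies the HEB equation with exactly the prescribed $\mu_j$-Dirac behaviour at each $p_j$---neither a weaker nor a stronger singularity---is the crux, and it is where the absence of an inductive (skew-Hermitian matrix) structure for general reductive $G$ forces one to argue intrinsically with the parabolic/Levi decomposition and induced connections, in place of the rank-reduction available for $\U_n$ in \cite{CH}.
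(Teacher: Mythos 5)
Your proposal is correct in outline and, for surjectivity, follows essentially the same route as the paper's Proposition \ref{P1}: extension of $(\pP,\rho)$ to a bundle on $Y$ using $\rho$ as monodromy around the circle, a partition-of-unity parametrix metric matching the $\mu_j$-Dirac models near the singularities, Simpson's heat flow, and the Kronheimer--Pauly Hopf-fibration lift to settle regularity at the $p_j$. Two points the paper makes explicit that you gloss over: first, Simpson's theorem \cite{S} requires a K\"{a}hler manifold, so the flow is not run on the 3-manifold $S^1\times\Si$ (or on $Y$) as you state, but on the lifted $S^1$-invariant bundle $\bar{P}$ over the 4-manifold $X = S^1\times Y$, with $S^1$-invariance of the initial metric guaranteeing that the limiting Hermitian--Einstein connection descends back to $Y$; second, since the general-$G$ case is handled by embedding through a faithful representation into $\GL_n$, one must verify that the flow \eqref{heateqn} remains in $G^c/G$ rather than wandering into the full space of Hermitian metrics, which the paper observes holds because both sides of the flow equation take values in $i\cdot\g$. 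For injectivity your route genuinely differs from the paper's Proposition \ref{Injectivity}: you invoke uniqueness of the Hermitian--Einstein metric within a fixed holomorphic structure as a black box, whereas the paper proves the needed statement directly, extending the isomorphism of meromorphic pairs through scattering to an isomorphism $\hat{\tau}$ over all of $S^1\times\Si$ and then running a Bochner-type vanishing argument with the identities \eqref{identities} to conclude that $\hat{\tau}$ is covariantly constant and intertwines the Higgs fields. The direct argument is not a luxury here: the standard Kobayashi--Hitchin uniqueness theorem is stated for compact K\"{a}hler manifolds without singularities, and in the present setting the integration by parts must be seen to produce no boundary contributions at the Dirac points, where $\Phi$ blows up like $1/R$; your appeal to uniqueness would need exactly this control, which is what the paper's explicit computation (together with the theory of induced connections from \cite{BHSD}, which you do correctly identify as the general-$G$ ingredient) supplies. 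With those caveats filled in, your plan reconstructs the paper's proof faithfully.
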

 
The proof demonstrated throughout the following two propositions \ref{P1} for surjectivity and \ref{Injectivity} for injectivity. Notice that this Theorem \ref{Main} is a condensed version of the main Theorem \ref{main} stated in the introduction as the reader is now assumed to be familiar with the objects at hand. 

\begin{proposition}\label{P1}
For any $\vec{t}$-stable pair $(\pP, \rho)$ on $\Sigma$ of type $\mathbf{K} = ((\mu_1, z_1), \dots, (\mu_N, z_N))$ with singular time data $0<t_1\leq t_2\leq \dots \leq t_n <\tau$, there is a singular $G$-monopole on $S^1\times \Sigma$ with Dirac singularities of weight $\mu_j$ at $p_j= (t_j, z_j)$ for which $\pH(P, \na, \phi) = (\pP, \rho)$. 
\end{proposition}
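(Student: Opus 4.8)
The plan is to invert the forgetful map $\pH$ by reconstructing a monopole from its algebraic shadow, following the strategy sketched in the introduction and adapting the unitary argument of \cite{CH} to arbitrary compact connected $G$. The construction proceeds in two stages: first build a holomorphic structure on $Y$ realizing the prescribed scattering data, then run a heat flow on the space of $G$-reductions (hermitian metrics) to deform this holomorphic structure into an honest solution of the HEB equation \eqref{HEB}.

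First I would reconstruct the complexified bundle $P^c\to Y$ together with its holomorphic structure $(\na_\Si^{0,1},\na^c)$ directly from $(\pP,\rho)$, reversing the monodromy construction of Section \ref{scattering}. Pulling $\pP$ back to the cylinder $[0,\tau]\times\Sigma$ and gluing the two ends by the meromorphic automorphism $\rho$ produces a holomorphic $G^c$-bundle over $S^1\times\Sigma$ whose restriction to $\{0\}\times\Sigma$ is $\pP$; because $\rho$ is encoded by $\mu_j$ at $z_j$ in the sense of Proposition \ref{scatmap}, the glued bundle is singular exactly at the $p_j=(t_j,z_j)$ and is modelled there on the $\mu_j$-Dirac monopole of Section \ref{Dirac}. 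Declaring $\na^c$ to be the scattering operator whose monodromy around $S^1$ is $\rho$, and $\na_\Si^{0,1}$ the slicewise holomorphic structure, yields a holomorphic structure on $Y$ whose $\pH$-image is $(\pP,\rho)$ essentially by construction.

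Next I would choose an initial $G$-reduction $H_0$ on $P^c$ to serve as a parametrix. Away from the singular loci $S^1\times\{z_j\}$ this can be any smooth reduction; in a neighbourhood of each $p_j$ it must be matched, via the radially-extended Hopf fibration of Pauly--Kronheimer \cite{MP}, to the model $\mu_j$-Dirac metric so that the induced data already satisfy $\Phi=\tfrac{\mu_{j*}}{2R}+\pO(1)$ and $\na(R\Phi)=\pO(1)$, making the curvature $\pO(R^{-2})$ and hence integrable near $p_j$. Patching these local choices with a partition of unity gives a global $H_0$ whose curvature meets the integrability and asymptotic profile required by Simpson's hypotheses \cite{S}. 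I would then evolve $H_0$ by a Donaldson-type heat flow $H^{-1}\partial_s H=-(\Lambda F_H-iC)$ driving the metric towards the surface equation \eqref{HEB1} of Lemma \ref{breakdown}; the remaining equations \eqref{HEB2} and \eqref{HEB3} are the holomorphicity conditions already guaranteed by the reconstructed holomorphic structure and are preserved along the flow. Since $Y$ and the parametrix initial data fit Simpson's profile exactly as in \cite{CH}, the flow exists for all time away from the singular neighbourhoods, and the $\vec{t}$-stability of $(\pP,\rho)$ — which by definition is the $\vec{t}$-stability of the holomorphic structure — is used to rule out escape to infinity and force convergence to a smooth limit $H_\infty$, whose associated $(\na,\Phi)$ solves \eqref{HEB}. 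As the flow alters only the metric and fixes the holomorphic structure, both $P^c_{|\{0\}\times\Si}$ and the monodromy $\rho_{0,\tau}$ are unchanged, so $\pH(P,\na,\Phi)=(\pP,\rho)$.

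The main obstacle will be the analysis in the singular neighbourhoods, which Simpson's global machinery does not cover and which must be treated separately: here I expect to invoke the Hopf-fibration correspondence to trade the singular monopole near $p_j$ for an $S^1$-invariant ASD connection on a punctured $4$-ball, carry out the removable-singularity and convergence estimates there, and then verify that the limit retains the precise Dirac asymptotics of weight $\mu_j$. A second, structural difficulty specific to arbitrary $G$ is that, unlike $\mathfrak{u}(n)$, a general reductive Lie algebra carries no inductive system on which to run the rank-reduction argument of \cite{CH}; both the convergence and the positivity of $\|\pF\|_\chi^2$ must instead be argued intrinsically through the maximal-parabolic/Levi decomposition and the curvature bound $F_{\pi_L(\na)}=\pi_L\circ F_\na-\pF\wedge\pF^*$ established in Proposition \ref{negative}.
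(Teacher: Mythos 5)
Your proposal reproduces the paper's proof essentially step for step: gluing $\pP$ to itself by $\rho$ to build the holomorphic bundle with the correct twisting at the $p_j$, constructing a parametrix metric matched to the $\mu_j$-Dirac models by partition of unity, running Simpson's heat flow with $\vec{t}$-stability forcing convergence (including the observation, specific to general $G$, that the flow must stay in the space of $G$-reductions $G^c/G$ inside the $\GL_n$ picture), and repairing regularity at the singularities via the Kronheimer--Pauly Hopf lift with Dirichlet boundary conditions. The only real difference is presentational: the paper first lifts the whole problem to an $S^1$-invariant bundle $\bar{P}$ on the K\"ahler 4-manifold $X = S^1\times Y$ so that Simpson's theorem (Lemma \ref{4.6}) applies globally, whereas you invoke the four-dimensional picture only locally near the $p_j$ --- to make the appeal to \cite{S} rigorous you would need that same global lift, since Simpson's existence and convergence theory lives on K\"ahler manifolds, not on the 3-manifold $Y$ itself.
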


\begin{proof}
Upon choosing a faithful unitary representation and so embedding into the $\GL_n$ case where this result has been proven in \cite{CH}. Since stability is tautologically preserved through the representation, it suffices to state the steps involved in the language of principle bundles and point out the places where additional arguments are needed.

The four main steps of this proof are as follows
\begin{itemize}
\item[(i)]
$\rho$ is used to extend $\pP$ to a bundle $P$ on $Y := (S^1\times \Sigma) \backslash \{p_1,\dots, p_N\}$ having the correct twisting around spheres about the $p_j$'s and a holomorphic structure. Thus it will be holomorphic on all $\Sigma_t$ and will lift to a holomorphic bundle $\bar{P}$ on the (open) complex manifold $X = S^1\times Y$ (a subset of $\overline{X} = S^1\times S^1\times \Sigma$). Furthermore, $\bar{P}$ is invariant under the action of $S^1$ on the left-most factor. 

\noindent\rule{10cm}{0.4pt}

More concretely, all of these ideas fit into the following diagram 
 \[
\xymatrix{
\bar{P}:=\pi_2^*(P)\ar[d]\ar[r]&P \ar[d]& \pi^*(\pP)\ar[l]_{\tilde{q}}\ar[r] \ar[d]& \pP\ar[d]\\
X\ar[r]^{\pi_2}&Y& \tilde{Y}\ar[l]_q\ar[r]^\pi & \Sigma
}
\]

where \[\tilde{Y} = \left((-\tau, \tau)\times \Sigma\right)\backslash \cup_j\left((-\tau, t_j-\tau)\cup (t_j, \tau)\right)\times \{z_j\}\]
with $\pi:\tilde{Y}\to \Sigma$ as the natural projection and $q:\tilde{Y} \to Y$ is generically a double cover defined by the identification $(t, z) \sim (t+\tau, z)$. Next, $P: = \tilde{q}\left(\pi^*(\pP)\right)$ is given by the equivalence relation $(t,z, v)\sim (t+\tau, z, \rho(z) v)$ with $t\in (-\tau, 0), z\in \Sigma$ and $v\in \pP_z$. Finally, $P$ is trivially lifted to an $S^1$-invariant, holomorphic $\bar{P}:= \pi^*_2(P)$ via the canonical projection $\pi_2:X\to Y$ with $X = S^1\times Y$. 

\noindent\rule{10cm}{0.4pt}

\item[(ii)]
Since $\bar{P}$ has a holomorphic structure, for any Hermitian metric (i.e. any reduction of $\bar{P}$ to $G$, i.e. any section of $\bar{P}(G^c/G)$), there is a unique metric connection\footnote{this is the Chern connection in the case of a $U_n\ss \GL_n(\bC)$ gauge.} which is compatible with the holomorphic structure. Such a Hermitian metric on $\bar{P}$ is chosen so that the induced connection around the $j^{th}$ singularity is that of a $\mu_j$-Dirac monopole.

\noindent\rule{10cm}{0.4pt}

The appropriate Hermitian metric is constructed (via a partition of unity) on the open cover of $Y$
\begin{align*}
U_0 &= \left((-2\e, t_N + 2\e)\times \Sigma\right) \backslash \left(\cup_j (t_j-\e, t_N+ 2\e)\times C_j\right)\\
U_{N+1} &= (t_N+\e, \tau-\e)\times \Sigma\\
U_{j-} &=  \left((t_j - 2\e, t_j+2\e)\times D_j \right)\backslash\left( (t_j, t_j+2\e)\times \{z_j\} \right)\\
U_{j+} &= \left((t_j - 2\e, t_j+2\e)\times D_j \right)\backslash\left( (t_j+2\e, t_j)\times \{z_j\} \right)
\end{align*}
where $D_i$ for $i = 0, 1, \dots, N$ are sufficiently small, disjoint open disks about each $p_i = (t_i, z_i)$ save for $D_0$ which is where the twisted curvature for the initial degree $k_0$ is concentrated. $C_i$ for $i = 1, \dots, N$ are another family of disks about each $z_i$ which are properly contained in the $D_i$'s and $\e>0$ is chosen so that 
\[4\e <\min(t_1, t_2-t_1,\dots, t_N-t_{N-1}, \tau-t_N).\]
 On this cover, the transition functions are specified as
\[\vf_{0,j-} = g_j, \, \vf_{j-,j+} = \mu_j, \, \vf_{0,j+} = g_j\cdot \mu_j, \, \vf_{j+, N+1} = h_j \]
and
 \[\vf_{0, N+1} = \begin{cases}\rho^{-1} , \, t\in (t_N+\e, t_N + 2\e)\\ 1, \, t\in (\tau-2\e, \tau-\e)\end{cases}. \]
 
Now, the bundle and its transition functions reflect those of $\mu$-Dirac monopoles on $U_{j\pm}$. Choose the hermitian metrics $\mu_j(R- t)$ on $U_{j-}$ and $\mu_j(R+t)$ on $U_{j+}$. These are compatible with each other under change of basis and are patched together, along with the metric lifted from $P$ on $U_0, U_{N+1}$ by partition of unity.

This metric $k$ is then  lifted to a metric $\bar{k}$ on $\overline{P}$ subject to the following properties:

\begin{lemma}
The pair $(\overline{P}, \bar{k})$ above satisfies
\begin{itemize}
\item[(a)] $\bar{P}$ is invariant under the $S^1$ action on the left factor of $X$
\item[(b)] $\bar{k}$ is $S^1$ invariant. 
\item[(c)] In neighbourhoods of inverse image of the $p_j$'s the pair $(\bar{P},\bar{k})$ corresponds to an $S^1$-invariant instanton of charge specified by $\mu_j$ . 
\item[(d)] $(\bar{P}, \bar{k})$ satisfies a bound $|\Lambda F_{\bar{k}}|\leq c <\infty$
\end{itemize}
\end{lemma}
\noindent\rule{10cm}{0.4pt}

\item[(iii)]
This metric serves as an initial metric for the heat flow of Simpson's paper \cite{S}. Taking the limit as time tends to infinity produces a principal-HE connection on $\bar{P}$ which is invariant under the $S^1$ action and so, descends to a bundle over $Y$. This will be our singular $G$-monopole, however one further analytic technicality remains. Note, in this situation we are considering a representation of a lie group embedded in $\GL_n(\bC)$ and only difference between this and the proof found in \cite{CH} is that here we must demonstrate that the heat flow remains within the desired subspace. 

\noindent\rule{10cm}{0.4pt}

Using $\bar{k}$ in $G^c/G$ from above as the starting point for Simpson's heat flow
\begin{equation}\label{heateqn}
\begin{split}
H^{-1} \frac{dH}{du} &= -i \Lambda F_H^\perp\\ H_0 &= \bar{k}
\end{split}
\end{equation}

This equation remains valid in $G^c/G$ as the left and right hand side both take values in $i\cdot \g$. 

The asymptotic behaviour of \eqref{heateqn} is governed by the following:

\begin{theorem}[Simpson \cite{S}, Theorem 1]\label{Simpson}
Let $(X,\omega)$ satisfy conditions in Lemma \ref{4.6} and suppose $E$ is an $S^1$-invariant bundle on $X$ with $S^1$-invariant metric $K$ satisfying that $\sup|\Lambda F_K|<c$. If $E$ is stable in the sense that it arises from a stable pair on $\Sigma$, then there is an $S^1$-invariant metric $H$ with $\det(H) = \det(K)$, $H$ and $K$ mutually bounded $\bar{\partial}(K^{-1}H) \in L^2$ and such that $\Lambda F_H^\perp = 0$. Additionally, \cite{CH}, if $R$ is the geodesic distance to one of the singularities, $R\cdot d(K^{-1}H)$ is bounded by a constant. 
\end{theorem}

Observe that our notion of stability generalizes the notion provided in \cite{CH} which coincides with Simpson's.

\begin{lemma}\label{4.6}
Our manifold $X = S^1\times ((S^1\times \Sigma)\backslash \{p_1,\dots, p_N\})$ satisfies the three necessary conditions for Simpson's Theorem
\begin{enumerate}
\item $X$ is K\"{a}hler and of finite volume;
\item There exists a $\geq 0$ exhaustion function with bounded Laplacian on $X$;
\item There is an increasing $a:[0, \infty)\to [0,\infty)$ such that $a(0) = 0$ and $a(x) = x$ for all $x>1$ so that if $f$ is a bounded positive function on $X$ with $\Delta(f)\leq B$, then 
\[\sup_X|f| \leq C(B)a\left(\int_X |f|\right)\]
and furthermore, if $\Delta(f)\leq 0 $ then $\Delta(f) = 0$. 
\end{enumerate}
\end{lemma}
\begin{proof} See \cite{CH} and \cite{MP}.
\end{proof}
\noindent\rule{10cm}{0.4pt}

\item[(iv)]
Simpson's theorem does not immediately provide the necessary regularity at the singular points. To see they are indeed of Dirac type (in the limit), the proof is finished by lifting locally on 3-balls using the Hopf map\footnote{Our Hopf map here is a restriction, to $B_4\sseq \bC^2$, of the well known \[\pi:\bC^2\to \bR^3;\,(z,w)\mapsto \left(z\bar{w} + w\bar{z}, i(z\bar{w} - w\bar{z}), |z|^2 - |w|^2\right))\]} $\pi:B^4\to B^3$. 

\noindent\rule{10cm}{0.4pt}

To avoid a very lengthy rehashing of the proof provided in both \cite{CH, BHSD} an illustrative description of this step is provided here and the reader is referred to either reference for all technicalities. 

Heuristically, to ensure regularity of $H_\infty$ from the previous step, restrict our initial metric, $H_0$, to a neighbourhood of $p$ (diffeomorphic to $B^3\backslash \{p\}$ and desingularize by extending the pullback of our Hopf map $\pi^*:\Om(B^3\times S^1)\to \Om(B^4)$. This process of desingularization was brought to light by Kronheimer \cite{Kr} and studied in more depth by Pauly \cite{MP}. After all differential forms of interest are pulled back and appropriately scaled, apply Simpson's heat flow (with Dirichlet boundary conditions) to the HEB equation \eqref{HEB} which is known to correspond to an $S^1$-invariant instanton equation. A new Hermitian metric is achieved in the heat-flow limit and then pushed back down to a metric describing a Dirac monopole at $p$ which, due to the imposed Dirichlet boundary condition, glues right back into the global picture. Finally, due to uniqueness of the solutions to Simpson's heat flow, this ``alternate'' solution is found to coincide with the previous and thus the previous metric satisfies the required regularity at the singularities. 
 
\noindent\rule{10cm}{0.4pt}
\end{itemize}
\end{proof}

Note the change in notation for $\tau$ below as it is no longer needed to denote circumference.

\begin{proposition}\label{Injectivity}
If two singular $G$-monopoles $(P, \na, \Phi)$ and $(P', \na', \Phi')$ yield isomorphic holomorphic data, then they are isomorphic (i.e. $\pH$ is injective). 
\end{proposition}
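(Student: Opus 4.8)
The plan is to show that an isomorphism of the holomorphic data $\pH(P,\na,\Phi) \cong \pH(P',\na',\Phi')$ can be promoted to an isomorphism of the full monopole triples $(P,\na,\Phi)\cong(P',\na',\Phi')$. The key point is that $\pH$ only forgets the choice of Hermitian metric (i.e. the reduction of $P^c$ to $G$) and the metric-compatible part of the connection; by the uniqueness half of Simpson's theorem (Theorem \ref{Simpson}), the Hermitian-Einstein metric solving the HEB equation is unique once the holomorphic structure and the boundary/singular data are fixed. So the strategy is: first reconstruct the complexified bundle $P^c$ on all of $Y$ from the meromorphic pair, exactly as in step (i) of Proposition \ref{P1}; second, observe that the holomorphic structure $(\na_\Si^{0,1},\na^c)$ is completely determined by the pair $(\pP,\rho)$; third, invoke uniqueness of the HE metric to conclude that the unitary connection $\na$ and the Higgs field $\Phi$ are recovered uniquely, up to $G$-gauge equivalence.

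\medskip

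\noindent First I would set up the data carefully. Suppose $\Psi$ is an isomorphism $(P^c_0,\rho_{0,\tau}) \to (P'^c_0,\rho'_{0,\tau})$ of meromorphic pairs over $\Si$; that is, a holomorphic bundle isomorphism $\Psi\colon P^c_0 \to P'^c_0$ intertwining the monodromy automorphisms, $\Psi\circ \rho_{0,\tau} = \rho'_{0,\tau}\circ \Psi$ as meromorphic sections of $\Aut$. Using the gluing construction of step (i) in the proof of Proposition \ref{P1}, the pair $(P^c_0,\rho_{0,\tau})$ determines the complexified bundle $P^c$ over $Y=(S^1\times\Si)\setminus\{p_i\}$ together with its holomorphic structure: the identification $(t,z,v)\sim(t+\tau,z,\rho(z)v)$ is precisely what $\Psi$ respects, so $\Psi$ extends uniquely to a holomorphic isomorphism $\widetilde\Psi\colon P^c \to P'^c$ of complexified bundles over $Y$, intertwining the scattering operators $\na^c$ and $\na'^c$. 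Here the compatibility of $\Psi$ with the monodromy is exactly the consistency needed for the extension across the cut $\{0\}\times\Si$ to be well-defined and single-valued around $S^1$.

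\medskip

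\noindent Next, both monopoles equip $P^c$, respectively $P'^c$, with a reduction to $G$ (a Hermitian metric) whose associated Chern connection is the unitary monopole connection, and whose induced metric near each $p_j$ has the Dirac asymptotics prescribed by $\mu_j$. Pulling the $G$-reduction of $(P',\na',\Phi')$ back along $\widetilde\Psi$ produces a second $G$-reduction of $P^c$ solving the same HEB equation with the same singular data. By the $S^1$-invariant formulation over $X = S^1\times Y$ and the uniqueness clause of Simpson's Theorem \ref{Simpson} (the metric $H$ with $\Lambda F_H^\perp=0$ and the prescribed boundary behaviour is unique, given that our stability hypothesis matches Simpson's), these two reductions must coincide up to a $G$-gauge transformation. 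Consequently $\widetilde\Psi$ carries the unitary data of $(P,\na,\Phi)$ to that of $(P',\na',\Phi')$, i.e. it restricts to a genuine isomorphism of $G$-monopoles. Since $\na=\tfrac12(\na^c+\overline{\na^c})$-type reconstruction and $\Phi = \tfrac{i}{2}(\na^c - (\na^c)^*)$ recover $\na$ and $\Phi$ from $\na^c$ and the metric, preserving both forces the triples to be isomorphic.

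\medskip

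\noindent The main obstacle is the uniqueness step at the singularities. Simpson's theorem gives uniqueness of the HE metric on the open manifold $X$ under the mutual-boundedness and $L^2$ conditions, but one must verify that the two $G$-reductions arising from the two monopoles genuinely satisfy these hypotheses relative to each other near the $p_j$, so that the difference $K^{-1}H$ is controlled (the bound $R\cdot d(K^{-1}H)$ of Theorem \ref{Simpson}). This is where the Dirac-type asymptotics and the desingularization via the Hopf map from step (iv) re-enter: one checks that both metrics have the same leading Dirac singularity $\tfrac{\mu_{j*}}{2R}$, so their ratio extends across $p_j$ with the regularity Simpson requires. The principal-bundle subtlety—that gauge transformations take values in $G$ rather than $\GL_n$—is handled as in Proposition \ref{P1} by embedding in a faithful representation and noting that the intertwining condition, being group-theoretic, is preserved; the author's thesis \cite{BHSD} supplies the induced-connection machinery needed to descend the resulting $\GL_n$-statement back to $G$.
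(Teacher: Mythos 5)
Your overall strategy---reconstruct the complexified bundle from the pair, extend the isomorphism to a holomorphic isomorphism $\widetilde{\Psi}$ over $Y$ intertwining the scattering operators, then conclude by uniqueness of the Hermitian--Einstein metric---is a recognized alternative route, and your first step agrees with the paper's: there too, the isomorphism $\tau$ of the meromorphic pairs is propagated by scattering to an isomorphism $\hat{\tau}$ of $P$ and $P'$ over all of $S^1\times\Sigma$. The genuine gap is your appeal to ``the uniqueness half of Simpson's theorem.'' Theorem \ref{Simpson} as quoted in the paper is an existence statement only; it has no uniqueness clause. The uniqueness used in step (iv) of Proposition \ref{P1} is parabolic uniqueness of the heat flow with fixed initial and Dirichlet boundary data, which is not what you need: you need uniqueness, up to $G$-gauge, of the limiting singular HE metric among all mutually bounded metrics compatible with a given holomorphic structure. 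That statement is true for stable/irreducible objects, but it is a theorem requiring proof, and its standard proof is precisely a vanishing argument: given two HE metrics one studies the section relating them, shows it is annihilated by the relevant holomorphic operators, and integrates by parts to force it to be covariantly constant. As written, your plan defers the entire analytic content of the proposition to an unproven citation, and the mutual-boundedness and $R\cdot d(K^{-1}H)$ control near the $p_j$ that you flag is a real additional obligation on your route, not a side remark.

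The paper short-circuits all of this by running the vanishing argument directly. It realizes $\Hom_G(P,P')$ as the associated bundle $(P\times P')\times_\vf G$, views the scattering-extended isomorphism $\hat{\tau}$ as a section with $\hat{\tau}_*\in\ker(\hat{\na}^{0,1}_\Sigma)\cap\ker(\hat{\na}_t-i\hat{\phi})$, and applies the Weitzenb\"ock-type identities \eqref{identities} with integration by parts over $S^1\times\Sigma$ to obtain
\[
0 \;=\; \int_{S^1\times\Sigma} |\hat{\phi}\hat{\tau}_*|^2 + |\hat{\na}_t\hat{\tau}_*|^2 + |\hat{\na}_\Sigma\hat{\tau}_*|^2\, d\nu,
\]
whence $\hat{\tau}$ is covariantly constant and intertwines the Higgs fields, i.e.\ is an isomorphism of monopoles. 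This avoids any reconstruction of $P^c$ from the pair, avoids comparing two metrics near the singularities, and needs no uniqueness theorem---indeed it is exactly the computation from which such a uniqueness theorem would be built. To salvage your route you must either prove the uniqueness statement you invoke (which will reproduce the paper's argument on $\Hom_G(P,P')$) or cite a precise singular-HE uniqueness result covering Dirac-type singularities and verify its mutual-boundedness hypotheses for $\widetilde{\Psi}^*H'$ against $H$.
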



\begin{proof}
Having the proof for vector bundles in mind (c.f. \cite{CH}), note that $\Hom_G(P, P')$ is realized as the associated $G$-fibre bundle $(P\times_B P')\times_\vf G$ where $\vf$ is the action of $G\times G$ on $G$ defined as \[(g, h)\cdot x:= g^{-1}xh = L_{g^{-1}}\circ R_h (x).\]

If $(P, \na, \phi)$ and $(P', \na', \phi')$ are singular $G$ monopoles such that \[\pH(P, \na, \phi) = (\pP,\rho)\cong (\pP', \rho') = \pH(P', \na', \phi'),\] then $\pP \cong \pP'$ are isomorphic as holomorphic principal bundles via some $G$-equivariant bundle map $\tau:\pP\to \pP'$ which furthermore satisfies $\tau\circ \rho = \rho'\circ \tau$. This holds more generally for each $\pP_t$ and $\pP_t'$ (as a result of scattering and intertwining with meromorphic data) meaning that $\tau$ aligns the invariant fibres of $\rho$ and $\rho'$ and so extends to an isomorphism $\hat{\tau}$ between $P$ and $P'$ over $S^1\times \Sigma$. This isomorphism $\hat{\tau}$ is viewed as a section of the $G$-fibre bundle $\Hom_G(P, P')$ which is equipped with the induced connection $\hat{\na} = (\na\times \na')\times _\vf \textbf{1}$, a Higgs field $\hat{\phi} = \phi'\otimes \textbf{I} - \textbf{I}\otimes \phi$ and furthermore $\hat{\tau}_*\in \ker(\hat{\na}_\Sigma^{0,1})\cap \ker(\hat{\na}_t- i \hat{\phi})$. Using the identities 
\begin{equation}\label{identities}
\begin{aligned}
\hat{\na}_\Sigma^{1,0}\hat{\na}_\Sigma^{0,1} &= (\Delta_\Sigma + i \hat{F}_\Sigma)\om
\\
(\hat{\na}_t+ i \hat{\phi})(\hat{\na}_t-i\hat{\phi}) &= \hat{\na}_t^2+ \hat{\phi}^2 - i\hat{\na}_t\hat{\phi},
\end{aligned}
\end{equation}
(performing integration by parts in a representation of $G$) one finds 
\begin{align*}
0 & = - \int_{S^1\times \Sigma} \langle \hat{\tau}_*, (\hat{\na}_t+ i \hat{\phi})(\hat{\na}_t-i\hat{\phi})\hat{\tau} + \om^{-1}\hat{\na}_\Sigma^{1,0}\hat{\na}_\Sigma^{0,1}\hat{\tau}_*\rangle d\nu\\
&=  \int_{S^1\times \Sigma}\langle \hat{\tau}_*, (-\hat{\phi}^2 - \hat{\na}^2_t - \hat{\Delta}_\Sigma) \hat{\tau}_*\rangle d\nu\\
&=  \int_{S^1\times \Sigma}|\hat{\phi}\hat{\tau}_*|^2 + |\hat{\na}_t\hat{\tau}_*|^2 + |\hat{\na}_\Sigma\hat{\tau}_*|^2d\nu.
\end{align*}
Hence, $\hat{\tau}$ is covaritantly constant and as a map $E\to E'$ it intertwines the two Higgs fields (i.e. $\hat{\phi}\circ\hat{\tau} = 0 $ is equivalent to $ \phi'\circ \hat{\tau} - \hat{\tau}\circ \phi = 0$). Therefore, the two monopoles are isomorphic.
\end{proof}

\section{Abelianization of meromorphic pairs}

For vector bundles our meromorphic pair $(\pE, \rho)$ can be transformed into an $n$-sheeted ramified cover $S_\rho$ of $\Si$ recording the spectrum of the automorphism $\rho$ and a sheaf $\pL$ which is (generically) a line bundle on the spectral cover $S_\rho$, corresponding to the eigenvectors of $\rho$.  More generally, for reductive $G^c$-fibrations, a similar process will yield pairs $(S_\rho, \mathcal{Q})$ where $S_\rho\to \Si$ is a $|W(G^c, T^c)|$-sheeted ramified cover of $\Si$ (called a \emph{cameral cover}) and $\mathcal{Q}$ is a $T^c$-bundle over $S_\rho$.

An inverse for these constructions are provided in several places throughout the literature \cite{Don, DG, H, HuMa,HuMa2, Sc, ScE} with varying levels of abstraction and difficulty.

\subsection{Spectral data associated to a bundle pair}

Consider the bundle pair $(\pE, \rho)$ where $\pE$ is a holomorphic vector bundle over a Riemann surface $\Si$ and $\rho$ is a meromorphic automorphism of $\pE$. To elaborate a bit further, one may express this as an automorphism away from $\{z_1, z_2, \dots, z_n\}$ with near $z_j$, $\rho$ may be expressed locally as \[\rho(z) = g(z)\diag\left( (z-z_j)^{k_1}, \dots ,(z-z_j)^{k_n} \right)h(z)\] (i.e. it is meromorphic in the sense that it has poles and zeros at some points). 

Analogously, a principal bundle pair $(\pP, \rho)$ will be a principal $G$-bundle over $\Si$ and $\rho\in \mathcal{M}(\Ad_P)$ a meromorphic section of $\Ad_P = P\otimes_G G$ (where $G$ acts by conjugation). The procedure developed here is referred to as the \emph{abelianization} of the bundle pair. 


\subsection{The spectral information (Cameral cover)}
From (\cite{HuMa} section 6.2), given the data $(\mathcal{P}, \rho)$, the meromorphic endomorphism $\rho\in \pM(\Aut(P))$ has a notion of spectrum given by examining its orbits under conjugation by $G$ as follows:

Fix a maximal torus $T^c$ (analogous to diagonal matrices) and to each $z\in \Si$, associate to $\rho|_{P_z}$, the Weyl group orbit in $T^c$ of the closure of the $G^c$-orbit (under conjugation) of the second coordinate in the equivalence class $\rho(p_z) = [p_z, \psi(z)] = \{(g\cdot p_z, g\psi(z) g^{-1}):g\in G^c\}$. That is, 

\begin{equation}\label{Spec}
S_\rho^0:= \{(z, \alpha)\in \Sigma\times T^c: \al\in \overline{\mathcal{O}_{G^c}(\psi(z))}\cap T\}
\end{equation}
where $\pO_{G^c}(\psi(z)) = \{g\psi(z)g^{-1}:g\in G^c\}$ is the conjugacy class of $\psi(z)$ in $G^c$.

Now, at first glance, since our torus here will be $T^c \cong (\bC^*)^n$ with $n$ as the rank of $G^c$, a first natural assumption might be that a compactification should be simply given by including the points $\{0, \infty\}$ for each copy of $\bC^*$. However, in most cases, this naive approach will not yield the desired \emph{Weyl-invariant compactification}. Assuming (to be discussed below) for a second that such an invariant compactification was at our fingertips, then $S_\rho:= \overline{S_\rho^0}^{W}$ defines a (generically) $|W(G^c,T^c)|$-fold branched cover of the Riemann surface, denoted by $q: S_\rho\to \Si$ (a projective subvariety of $\Si\times \overline{T^c}^W$).

\subsection{A maximal torus bundle on the cameral cover}
Next, with the spectral information in hand, pullback $\mathcal{P}$ via $q$ to a bundle on $S_\rho$.

Fixing some Borel subgroup $B\leq G^c$ containing $T^c$ 
, it is known (by the Lie-Kolchin Theorem) that any group element may be conjugated into $B$. However, previously, there was no canonical choice for doing so. Having now separated the different possible semi-simple components in the $G^c$-orbit of $\rho$, this lifted bundle $q^*P$ should now admit a canonical reduction to $B$.

Indeed, writing $B$ as the semi-direct product $T^c\ltimes U$,  define 
\[P_B = \{p_{(z,\al)} \in q^*P: q^*\rho(p) = [p, \al\cdot u], \text{ for some } u\in U\}\subset q^*P.\]
That is to say $P_B$ is the family for frames for which $P$ is of the form $\al\cdot u$. Then, appealing to the fact that Borel subgroups are self-normalizing (i.e. $N_G(B)= B$), one find that the condition 
\[(p, \al \cdot u) \sim (h\cdot p, h\al \cdot u h^{-1}) = (h\cdot p, \al \cdot u')\]
for some $u'\in U$ holds if and only if $h\in B$. Hence $P_B$ is a reduction of the pullback $q^* P$ over $S_\rho$ to $B$.  Furthermore, the lifted map $q^*\rho$ is naturally found as a section of the associated reduction $\Aut(P_B)  = \text{Ad}_{P_B}$.

Now, through the isomorphism $B\cong T^c\ltimes U$, which gives the exact sequence $U\hookrightarrow B\stackrel{\pi}{\twoheadrightarrow} T$, the reduced $B$-bundle $P_B $ as an element of the non-abelian sheaf cohomology group $H^1(S_\rho; B)$ naturally also defines an element $\pi \circ P_B \in H^1(S_\rho; T)$ which is denoted by $\mathcal{Q}$. This $\mathcal{Q}$ is the desired $T$-bundle over $S_\rho$ alluded to above for which we would like to consider the pair $(S_\rho, \mathcal{Q})$ as the \emph{abelianization} of $(\pP, \rho)$. 

Note that, furthermore the unipotent information $U_{\mathcal{Q}}$ is realized as the pre-image $\pi^{-1}(\mathcal{Q}) \in H^1(S_\rho, U_{\mathcal{Q}})$ where, say, at $(z, \al)\in S_\rho$ \[U_{(z,\al)} = \pi^{-1}(\al) = \{b\in B: \exists u\in U, b = \al\cdot u\}.\]

\begin{remark}
 A reversal of this procedure, at least in the generic setting, is outlined in \cite{H} section 2. By generic, one means that the logarithm of the cameral cover (so to take values in $\mathfrak{t}$ rather than $T^c$) crosses walls of the Weyl-chamber transversally and never more than one at a time. This implies that the stabilizers at branch points are isomorphic to $\bZ/2$ and there exists a choice of gauge for which $\rho$'s orbit contains an element appearing, in matrix form, as $\smat{a& 1\\0& a}\oplus\diag(\la_1, \dots, \la_{n-2})$ with distinct $\la_1, \dots, \la_{n-2}$. 
\end{remark}

In the spirit of providing a displayed result, this shows

\begin{proposition}
There is a map from the moduli space of $\vec{t}$-stable meromorphic $G$-bundle pairs $(\mathcal{P}\to \Sigma, \rho)$ with some suitably defined moduli space of pairs $(S_\rho, \mathcal{T})$ where 

\begin{itemize}
\item $S_\rho\to \Sigma$ (a cameral cover of a Riemann surface) is the Weyl-invariant compactification of the spectral curve obtained from $(\mathcal{P}, \rho)$ through the Lie group analogue of Jordan canonical form of matrices and 
\item $\mathcal{T}$ (a maximal torus bundle on $S_\rho$) is obtained as a projection to the maximal torus of the Borel reduction achieved upon pulling back $\mathcal{P}$ to the cameral cover $S_\rho$. 
\end{itemize}
\end{proposition}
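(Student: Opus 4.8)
The plan is to recognize that this proposition is in essence a packaging statement: the two preceding subsections already construct, from a meromorphic pair $(\pP,\rho)$, both the cameral cover $S_\rho$ and the maximal torus bundle $\mathcal{T}=\mathcal{Q}$. What remains is to verify that these assignments are canonical enough to descend to the moduli spaces, i.e.\ that they are invariant under the isomorphisms of meromorphic pairs defining the equivalence relation on $\pM_{\vec{t}s}(\Si,\mathbf{K})$ and independent of the auxiliary choices ($T^c$ and $B$) made along the way. Thus I would present the map as $(\pP,\rho)\mapsto(S_\rho,\mathcal{T})$ and spend the proof checking functoriality rather than reconstructing the objects.

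First I would verify that $S_\rho$ depends only on the isomorphism class of $(\pP,\rho)$. The fibrewise datum in \eqref{Spec} is the Weyl orbit of $\overline{\pO_{G^c}(\psi(z))}\cap T^c$, built purely from the conjugation action; it is therefore unchanged under an inner twist of the frame, and an isomorphism $\tau:\pP\to\pP'$ with $\tau\circ\rho=\rho'\circ\tau$ sends $\psi(z)$ to a $G^c$-conjugate of $\psi'(z)$. This identifies $S_\rho^0$ with $S_{\rho'}^0$ compatibly with the projections to $\Si$, and the identification extends over the compactification. A separate point is that $\overline{T^c}^{\,W}$ exists as a projective $W$-variety, so that $S_\rho=\overline{S_\rho^0}^{\,W}$ is a projective subvariety of $\Si\times\overline{T^c}^{\,W}$ giving a generically $|W(G^c,T^c)|$-fold branched cover $q:S_\rho\to\Si$; here I would fix such a compactification once and for all and confirm the generic fibre count from the fact that the intersection of a regular semisimple conjugacy class with $T^c$ is a single Weyl orbit of size $|W|$.

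Next I would record that the torus bundle is canonical. On $S_\rho$ the pulled-back section $q^*\rho$ has semisimple part with prescribed value $\al$ at $(z,\al)$, so the set $P_B\subset q^*\pP$ of frames putting $q^*\rho$ into the form $\al\cdot u$, $u\in U$, is a genuine $B$-reduction: self-normalization of $B$ (the computation $(p,\al\cdot u)\sim(h p, h\al u h^{-1})$ forcing $h\in B$) shows it is a $B$-torsor. Projecting along $B\twoheadrightarrow T^c$ yields $\mathcal{T}=\mathcal{Q}\in H^1(S_\rho;T^c)$. I would then check independence of the Borel $B\supset T^c$, since any two are conjugate under $N_{G^c}(T^c)$ and the induced relabelling is absorbed by the $W$-action already built into $S_\rho$, and check that an isomorphism of pairs induces, after transport along $S_\rho\cong S_{\rho'}$, an isomorphism $\mathcal{T}\cong\mathcal{T}'$. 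Assembling these, $(\pP,\rho)\mapsto(S_\rho,\mathcal{T})$ is well defined on isomorphism classes.

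The hard part will be the Weyl-invariant compactification together with the behaviour over the branch locus. Away from the $z_j$ and the ramification points the construction is transparent, but near a point where $\overline{\pO_{G^c}(\psi(z))}\cap T^c$ degenerates---the sheets of $q$ coming together, or $\psi(z)$ approaching the boundary of $T^c$---one must know that $\overline{S_\rho^0}^{\,W}$ stays a flat, reduced family and that the $B$-reduction extends across these loci, perhaps only after normalization. This is precisely where a naive coordinatewise compactification fails and where the delicacy of the intended ``suitably defined'' target moduli space lies; a careful analysis (following \cite{H,HuMa,HuMa2}) of the stabilizer structure at branch points, as sketched in the remark on the generic $\bZ/2$ stabilizers, is what guarantees that the map lands in a separated moduli space of spectral pairs.
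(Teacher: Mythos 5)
Your proposal takes essentially the same route as the paper: the paper's proof of this proposition is nothing more than the constructions of the two preceding subsections---the cameral cover $S_\rho$ cut out by the Weyl-orbit spectrum \eqref{Spec} with its Weyl-invariant compactification, and the torus bundle obtained from the Borel reduction $P_B$ (justified by the self-normalization computation you reproduce) followed by the projection $B\twoheadrightarrow T^c$---which is exactly what you package into the map $(\pP,\rho)\mapsto (S_\rho,\mathcal{T})$. Your additional verifications (invariance under isomorphisms of pairs, independence of the choices of $T^c$ and $B$ via the $W$-action, and the flagged delicacy of the compactification over the branch locus) go slightly beyond what the paper records explicitly, since it leaves well-definedness on moduli implicit in the phrase ``suitably defined,'' but these are refinements of, not departures from, the paper's argument.
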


\subsection{Weyl-invariant compactifications of maximal tori}
Now, as mentioned, in the standard case, when $\U_n^c = \GL_n$, one simply compactifies its maximal torus $(\bC^*)^n$ to $(\bC P^1)^n$ by the natural extension of the two point ($\{0, \infty\}$) compactification of $\bC^*$. Any point here is invariant under permutation (i.e. the Weyl-group of $\GL_n$). Notice that $\SL_n$ has the same Weyl-group as $\GL_n$, but the maximal torus is only $(n-1)$-dimensional. Of course then, since algebraic groups faithfully embed into $\GL_N$ (for some $N$), one can expect to realize the compactification of their tori as compact subvarieties of $(\bC^*)^N$. In fact, given a complex reductive Lie group $G^c$ of rank $k$, a general procedure is stated as follows; Consider maximal $T^c\subset G^c$ (isomorphic to $(\bC^*)^k$) along with the embedding $\iota:G^c\hookrightarrow \GL_N$. Compactify the torus to $\overline{T^c} \cong (\bC P^1)^k$ and find its image under $\iota$ as a $k$-dimensional subvariety in $\overline{T_{\GL_N}} \cong (\bC P^1)^N$.

\begin{example}
One can provide a sketch of some low-dimensional cases\\
1. $G^c = \SL_3(\bC)$ has rank 2. A natural choice of maximal torus is already embedded in $T_{\GL_3}$ as $\{(x,y,z)\in (\bC^*)^3:xyz = 1\}$ (may require desingularization at $\infty$). Notice immediately that certain combinations of zeros and infinities in $(\bC P^1)^3$ are not compatible with the constraint $xyz = 1$. It suffices to check the image of $(\bC P^1)^2$ in $(\bC P^1)^3$ under the map $(x,y)\mapsto (x,y,(xy)^{-1})$. Upon doing so, one finds a complex hexagon as the image of $\overline{T_{\SL_3}}$ inside of $\overline{T_{\GL_3}}\cong (\bC P^1)^3$. \\
2. $G^c = \Sp_2(\bC)$ has rank 2. A natural choice of maximal torus is embedded in $T_{\GL_4}$ as $\{(x,y,z,w):xz=1, yw=1\}$ and verifying the image of zeros and infinities through the map $(x,y) \mapsto (x,y,x^{-1}, y^{-1})$ reveals a complex quadrilateral as a codimension 2 subvariety in $(\bC P^1)^4$.
\end{example}

\end{document}